\numberwithin{equation}{section}
\numberwithin{figure}{section}
\theoremstyle{plain}
\newtheorem{thm}{\protect\theoremname}
  \theoremstyle{definition}
  \newtheorem{defn}[thm]{\protect\definitionname}
  \theoremstyle{definition}
  \newtheorem{example}[thm]{\protect\examplename}
  \theoremstyle{remark}
  \newtheorem{rem}[thm]{\protect\remarkname}
  \theoremstyle{plain}
  \newtheorem{prop}[thm]{\protect\propositionname}
  \theoremstyle{plain}
  \newtheorem{cor}[thm]{\protect\corollaryname}
\newcommand{\xyR}[1]{ \makeatletter
\xydef@\xymatrixrowsep@{#1} \makeatother} 
\newcommand{\xyC}[1]{ \makeatletter
\xydef@\xymatrixcolsep@{#1} \makeatother} 
\newcommand{\ra}{\longrightarrow}
\newcommand{\field}[1]{\mathbb{#1}}
\newcommand{\R}{\field{R}} 
\newcommand{\N}{\field{N}} 
\newcommand{\Z}{\ensuremath{\mathbb{Z}}} 
\newcommand{\F}{\mathcal{F}}
\renewcommand{\phi}{\varphi} 
\DeclareMathOperator*{\colim}{colim} 
\newcommand{\Coo}{\mbox{\ensuremath{\mathcal{C}}}^{\infty}} 
\DeclareMathOperator{\Set}{{\bf Set}} 
\newcommand{\FDiff}{\ext{\Coo}} 
\newcommand{\Top}{{\bf Top}} 
\newcommand{\Diff}{{\bf Diff}} 
\newcommand{\FR}{{{}^\bullet\R}} 
\newcommand{\st}[1]{{\makebox[0pt][r]{\phantom{$#1$}}^{\circ} #1}} 
\newcommand{\ext}[1]{{}^\bullet #1} 
\newcommand{\blank}{-}
\newcommand{\CSh}{{\mathfrak{C}\mathrm{Sh}}} 
\newcommand{\So}{\mathcal{S}}
\newcommand{\FU}{\ext{U}}
\newcommand{\FV}{\ext{V}}
\newcommand{\FtV}{\ext{\tilde{V}}}
\newcommand{\FW}{\ext{W}}
\newcommand{\FX}{\ext{X}}
\newcommand{\FY}{\ext{Y}}
\newcommand{\FZ}{\ext{Z}}
\newcommand{\FA}{\ext{A}}
\newcommand{\FM}{\ext{M}}
\newcommand{\Ff}{\ext{f}}
\newcommand{\Fg}{\ext{g}}
\newcommand{\Fp}{\ext{p}}
\newcommand{\Fq}{\ext{q}}
\newcommand{\Lf}{{{}_{\bullet} f}}
\newcommand{\Lg}{{{}_{\bullet} g}}
\newcommand{\LX}{{{}_{\bullet} X}}
\newcommand{\LY}{{{}_{\bullet} Y}}
  \providecommand{\corollaryname}{Corollary}
  \providecommand{\definitionname}{Definition}
  \providecommand{\examplename}{Example}
  \providecommand{\propositionname}{Proposition}
  \providecommand{\remarkname}{Remark}
\providecommand{\theoremname}{Theorem}
\begin{document}

\title{The Fermat Functors\\
{\footnotesize{}Part I: The theory}}

\author{Enxin Wu}

\thanks{E.~Wu has been $\text{supp}$orted by grant P25311-N25 of the Austrian
Science Fund FWF}

\address{\textsc{Faculty of Mathematics, University of Vienna, Austria, Oskar-Morgenstern-Platz
1, 1090 Wien, Austria}}

\email{\texttt{enxin.wu@univie.ac.at}}
\begin{abstract}
In this paper, we use some basic quasi-topos theory to study two functors:
one adding infinitesimals of Fermat reals to diffeological spaces
(which generalize smooth manifolds including singular spaces and infinite
dimensional spaces), and the other deleting infinitesimals on Fermat
spaces. We study the properties of these functors, and calculate some
examples. These serve as fundamentals for developing differential
geometry on diffeological spaces using infinitesimals in a future
paper.
\end{abstract}

\maketitle
\tableofcontents{}

\section{Introduction}

Using infinitesimals to study geometry goes back to I. Newton or even
earlier, as one of the motivations for developing calculus, and hence
the start of the modern mathematics. Although infinitesimal theory
was not rigorous at the beginning, the intuitive idea behind it was
so enlightening that a great amount of work at that time by mathematicians
like L. Euler, J.-L. Lagrange, etc, were influenced by that. It was
A.-L. Cauchy who made the definition of limit rigorous using the epsilon-delta
language. Since then infinitesimal theory gradually left the main
stream of mathematics.

On the other hand, many concepts in geometry came from intuitive infinitesimal
considerations, for example, tangent vectors, vector fields, Lie groups,
Lie algebras, connections, curvature, etc. Many modern formulations
of these concepts leave very little trace of their original ideas,
but they are very convenient for doing computations. In other words,
there is a step from translating geometric ideas using infinitesimals
to the modern formulations, and most of time, this step is left as
a gap in most literature, especially for students start to learn this
field. It is always a great hope that infinitesimal theory could be
made rigorous and enter differential geometry for the real content. 

Going back to the rigor of infinitesimal theory, nowadays, there are
a few such theories available on the market. Two of the most developed
ones are Non-Standard Analysis (NSA; see for example~\cite{Ro96})
and Synthetic Differential Geometry (SDG; see for example~\cite{Ko06,La96}).
The infinitesimals in SDG are nilpotent%
\footnote{More precisely, the square of any infinitesimal number in SDG is 0.%
}, while those in NSA are not. In the smooth manifold%
\footnote{By a smooth manifold in this paper, we always mean it to be finite
dimensional, second countable, Hausdorff, and without boundary.%
} case, a tangent space at a point is the first-order approximation
of the manifold. That is, if we embed the smooth manifold in some
Euclidean space, and assume that the local defining function of the
manifold around that point is $f$, then the Taylor expansion of $f$
up to order $1$ is the tangent space there. This example implies
that we can use nilpotent infinitesimals to do differential geometry,
i.e., if the infinitesimal incremental of the variables of $f$ already
has the property that the multiplication of any two of them is $0$,
then the tangent space is exactly the Taylor expansion at that point.
One can also imagine that the higher order geometric structures such
as jets can be characterized using higher order infinitesimals. In
order to axiomatize first-order approximation, SDG has a very strong
axiom called the Kock-Lawvere axiom. This axiom requires the (commutative
unital) ring with infinitesimals to satisfy an affine condition for
every function from infinitesimals to the ring, and in the framework
of classical logic, the only such ring is the trivial ring. In other
words, the whole theory of SDG is built upon a new world called intuitionistic
logic.

The theory of Fermat reals introduced by P. Giordano in~\cite{Gio10}
is another infinitesimal theory, where every infinitesimal is nilpotent,
and the theory is compatible with classical logic; see Section~\ref{sec:BasicFermatReals}
for a brief summary of the basics of this theory. It is not hard to
redo many classical constructions of differential geometry on smooth
manifolds using Fermat reals; some of them have already been explored
in~\cite{Gio09}, and more will be presented systematically in a
following paper. 

Note that many spaces other than smooth manifolds arise naturally
and frequently in geometry, for example, smooth manifolds with boundary
or corners, singular orbit spaces of Lie groups acting on smooth manifolds
(in particular, orbifolds), function spaces between smooth manifolds,
diffeomorphism groups of smooth manifolds, etc. These spaces are usually
studied separately in the literature. There are generalizations of
smooth manifolds which contain (some of) them. Diffeology is one of
such generalizations, introduced by J.-M. Souriau in~\cite{So80,So84}.
A standard textbook is~\cite{Igl13}. Briefly, a diffeological space
is a set together with specified functions from open subsets of $\R^{n}$
for all $n$ to this set, satisfying three simple axioms. These axioms
declare when a function from an open subset of a Euclidean space to
this set is ``smooth''. A typical non-trivial and important example
is an irrational torus, which cannot be characterized by (continuous)
maps from this space to Euclidean spaces. Moreover, there is a quasi-topos
approach to diffeology (\cite{Ba-Ho11}), and this idea has been extended
to Fermat reals (\cite{Gio11}), called Fermat spaces. 

An approach of adding infinitesimals on diffeological spaces has been
tried in~\cite{Gio09}, which uses maps from diffeological spaces
to Euclidean spaces to identify little-oh polynomials (a pre-model
for infinitesimals) on this diffeological space; see~\cite[Chapter~8]{Gio09}.
The theory goes well with smooth manifolds, but not with general diffeological
spaces%
\footnote{For example, it is easy to check by definition that it turns an irrational
torus to a trivial Fermat space.%
}. This leads the author to think of a very different approach of adding
infinitesimals. More precisely, since diffeological spaces are concrete
sheaves over the Souriau site, and Fermat spaces are concrete sheaves
over the Fermat site (Example~\ref{exa:concrete-sheaves}), to find
natural relationship between them, it is enough to find natural functors
between the two sites. In this way, we not only get a definition of
the adding infinitesimal functor (Proposition~\ref{prop:Fermat-extension})
which is different from the one presented in~\cite{Gio09} (Proposition~\ref{prop:difference}),
but also obtain its left inverse, called the deleting infinitesimal
functor (Propositions~\ref{prop:def-delete} and~\ref{prop:delete}).
Almost every property of the adding infinitesimal functor in~\cite{Gio09}
holds in this new definition, with most restrictive conditions removed
(see Subsection~\ref{sub:add}). In Subsection~\ref{sub:why}, we
discuss the comparison of another natural adding infinitesimal functor%
\footnote{It uses another canonical concrete site; see Remark~\ref{rem:site}.
Indeed, we used this concrete site in the definition of the deleting
infinitesimal functor.%
} with the current one, and explain why the current one is better.
Finally in Subsection~\ref{sub:Calculations}, we do a few calculations,
and show that in general the calculation is not easy%
\footnote{More precisely, we show in Example~\ref{exa:not-commute-colimits}
that the adding infinitesimal functor does not commute with arbitrary
colimits.%
}. All of these will serve as fundamentals for developing differential
geometry on diffeological spaces in a future paper.

\medskip{}

I would like to thank P. Giordano for suggesting this project.

\section{\label{sec:BasicFermatReals}Basic of Fermat reals}

Fermat reals were introduced by P. Giordano in \cite{Gio09,Gio10,Gio11,Gi-Ku13}.
Let us review the basic theory here; see these references for detailed
proof of these results.

\medskip{}

Let $U$ be an open subset of $\R^{n}$. We define $U_{0}[t]$, the
\emph{little-oh polynomials} on $U$, to be the set of functions $x:[0,\epsilon)\ra U$
for some (not fixed) $\epsilon\in\R_{>0}$ with the property that
\[
\|x(t)-r-\sum_{i=1}^{k}\alpha_{i}t^{a_{i}}\|=o(t)\;\text{i.e.,}\;\lim_{t\ra0}\frac{\|x(t)-r-\sum_{i=1}^{k}\alpha_{i}t^{a_{i}}\|}{t}=0
\]
for some $r\in U$, $k\in\N$, $\alpha_{i}\in\R^{n}$ and $a_{i}>0$.
Two little-oh polynomials $x$ and $y$ are called equivalent if $x(0)=y(0)$
and $x(t)-y(t)=o(t)$. This is an equivalence relation on $U_{0}[t]$,
and the quotient set is denoted by $\FU$. As a consequence, every
element in $\FU$ has a unique representing little-oh polynomial of
the form 
\begin{equation}
y(t)=\st{y}+\sum_{i=1}^{l}\beta_{i}t^{b_{i}}\label{eq:decomposition}
\end{equation}
 for some $\st{y}(:=y(0))\in U$, $l\in\N$, $\beta_{i}\in(\R^{n}\setminus\{0\})$
and $0<b_{1}<b_{2}<\cdots<b_{l}\leq1$, defined on $[0,\delta)$ for
some maximum $\delta\in\R_{>0}\cup\{\infty\}$.%
\footnote{Careful reader will notice that there are two main differences between
the presentation here and the one in the existing references. One
is, we use germs at $0$ for little-oh polynomials, since sometimes,
such functions are not necessarily globally defined, and another one
is the expression of the unique representing little-oh polynomials
without using the notation changes: $t^{b}\leftrightarrow dt_{1/b}$,
since from my opinion, the use of $t^{b}$ is closer to the traditional
way of viewing such functions as a kind of ``polynomials'', and much
easier for doing computations, etc. %
} We call this the \emph{decomposition} of the element $[y]$, $\st{y}$
the \emph{standard part}, and we define $\omega([y]):=\frac{1}{b_{1}}$
the \emph{order} of $[y]$. For convenience, we sometimes use a similar
form of $y(t)$ as \eqref{eq:decomposition} but allowing $\beta_{i}=0$,
and we call such a form a \emph{quasi-decomposition} of $[y]$. From
now on, we write elements in $\FU$ by $y$ instead of $[y]$ whenever
there is no confusion.

Given a finite set of open subsets $\{U_{i}\}_{i\in I}$ of Euclidean
spaces, $^{\bullet}(\prod_{i\in I}U_{i})$ naturally bijects $\prod_{i\in I}\FU_{i}$.
Therefore, we do not distinguish $^{\bullet}(\R^{n})$ and $(\FR)^{n}$,
and write it as $\FR^{n}$. We can also identify $\FU$ as a subset
of $\FR^{n}$ by $\FU=\{x\in\FR^{n}\mid\st{x}\in U\}$ when $U$ is
an open subset of $\R^{n}$.

There are canonical functions $i_{U}:U\ra\FU$ and $ev_{0}:\FU\ra U$
defined by $i_{U}(u)(t)=u$ and $ev_{0}(x)=\st{x}$, and we have $ev_{0}\circ i_{U}=1_{U}$.
Therefore, $\FU$ is an extension of $U$, and for $x\in\FU$, we
call $\delta x:=x-\st{x}$ the \emph{infinitesimal part} of $x$.
The meaning is clear when $U=\R$: we can give a well ordering on
$\FR$%
\footnote{It is a commutative unital ring under pointwise addition and pointwise
multiplication, called the ring of \emph{Fermat reals.}%
} by $x\leq y$ if $x=\st{x}+\sum_{i=1}^{n}\alpha_{i}t^{a_{i}}$ and
$y=\st{y}+\sum_{i=1}^{n}\beta_{i}t^{a_{i}}$, both in the quasi-standard
form, with $(\st{x},\alpha_{1,}\ldots,\alpha_{n})\leq(\st{y},\beta_{1},\ldots,\beta_{n})$
in the dictionary order, and then $D_{\infty}:=\{x\in\FR\mid\st{x}=0\}=\{x\in\FR\mid-r<x<r\text{ for all }r\in\R_{>0}\}$.
Moreover, every infinitesimal part $\delta x$ of $x\in\FU$ is nilpotent,
i.e., there exists some $m=m(x)\in\N$ such that $(\delta x)^{m}=0$.

$D_{\infty}$ is the unique maximal (prime) ideal of $\FR$. The subsets
$\{0\}$, $D_{a}=\{x\in D_{\infty}\mid\omega(x)<a+1\}$ for all $a\in\R_{>0}\cup\{\infty\}$
and $I_{b}=\{x\in D_{\infty}\mid\omega(x)\leq b\}$ for all $b\in\R_{\geq1}$
are all the ideals of $\FR$. We simply write $D$ for $D_{1}$, called
the set of \emph{first-order infinitesimals}.

On $\FR^{n}$, define $\tau:=\{\FU\mid U\text{ is an open subset of }\R^{n}\}$.
Then $\tau$ is a topology on $\FR^{n}$, called the \emph{Fermat
topology}, since $^{\bullet}(U\cap V)=\FU\cap\FV$ and $^{\bullet}(\cup_{i}U_{i})=\cup_{i}\FU_{i}$.
Without specification, for every subset $A$ of $\FR^{n}$, we always
equip it with the sub-topology of the Fermat topology of $\FR^{n}$.

Let $f:U\ra V$ be a smooth map between open subsets of Euclidean
spaces. Then $\Ff:\FU\ra\FV$ by $\Ff(x)=f\circ x$ is a well-defined
map extending $f$ (called the \emph{Fermat extension of $f$}), i.e.,
we have the following commutative diagram in $\Set$:
\[
\xymatrix{U\ar[d]_{f}\ar[r]^{i_{U}} & \FU\ar[d]_{\Ff}\ar[r]^{ev_{0}} & U\ar[d]^{f}\\
V\ar[r]_{i_{V}} & \FV\ar[r]_{ev_{0}} & V,
}
\]
The calculation of $\Ff(x)=\Ff(\st{x}+\delta x)$ can be done by Taylor's
expansion of $f$ at the point $\st{x}$, using the nilpotency of
$\delta x$. More precisely, if the $(m+1)^{th}$ power of each component
of $\delta x$ is $0$ for some $m\in\N$, then we have 
\[
\Ff(x)=\Ff(\st{x}+\delta x)=\sum_{i\in\N^{m},|i|\leq m}\frac{1}{i!}\frac{\partial^{|i|}f}{\partial x^{i}}(\st{x})\cdot(\delta x)^{i}.
\]
 Therefore, for any open subset $W$ of $V$, we have $^{\bullet}(f^{-1}(W))=(\Ff)^{-1}(\FW)$,
i.e., $\Ff$ is continuous with respect to the Fermat topology.

Note that when $U\neq\emptyset$ and $\dim(V)>0$, not every constant
map $\FU\ra\FV$ is of the form $\Ff$ for some smooth map $f:U\ra V$,
since otherwise $\Ff(u)\in V\subset\FV$ for every $u\in U\subseteq\FU$.
In order to get a concrete site (see next section), we introduce the
following definition:
\begin{defn}
Let $A\subseteq\FR^{n}$ and $B\subseteq\FR^{m}$ be arbitrary subsets.
A function $f:A\ra B$ is called \emph{quasi-standard smooth} if for
every $a\in A$, there exist an open neighborhood $U$ of $\st{a}$
in $\R^{n}$, an open subset $P$ of some Euclidean space, a smooth
map $\alpha:P\times U\ra\R^{m}$ and some fixed point $p\in\ext{P}$,
such that for every $x\in A\cap\FU$, we have 
\[
f(x)=\ext{\alpha}(p,x).
\]

\end{defn}
In particular, every constant map $A\ra B$ and $i_{U}:U\ra\FU$ are
quasi-standard smooth. Moreover, every quasi-standard smooth map is
continuous with respect to the Fermat topology.

\section{\label{s:concrete}Concrete sites and concrete sheaves}

The notion of concrete sites and concrete sheaves goes back to~\cite{Dub79}.
A review of the categories of concrete sheaves, with special attention
to smooth spaces is in~\cite{Ba-Ho11}. We collect some essential
results here and review two examples related to this paper, in order
to unify notations for the following sections. For explicit definitions
and detailed proof of the properties, see~\cite{Dub79}, \cite{Ba-Ho11},
\cite{Joh02} and~\cite[Subsections~1.2 and~2.1]{Wu12}, but we will
not need any of them in this paper.

To be brief, a concrete site is a site with a terminal object, such
that there is a faithful functor from the site to the category $\Set$
of sets and functions (defined using the terminal object), every cover
is jointly surjective on the underlying sets, and every representable
presheaf is actually a sheaf. A concrete sheaf over a concrete site
is a sheaf over this site with an underlying set (as sections over
the terminal object), such that every section is a function between
the underlying sets. Given a concrete site $\mathcal{A}$, the category
$\CSh(\mathcal{A})$ of all concrete sheaves over this site forms
a quasi-topos, i.e., it is like a topos, but with a weak subobject
classifier (that is, it only classifies strong subobjects instead
of all subobjects).

Here are some basic properties of a quasi-topos:
\begin{itemize}
\item It is complete and cocomplete.
\item It is (locally) Cartesian closed.
\item It is locally presentable. 
\end{itemize}
We will make use of the following corollaries a lot in the following
sections:
\begin{enumerate}[leftmargin=*,label=(\roman*),align=left ]
\item The concrete site is canonically a full subcategory of the category
of concrete sheaves over it. By abuse of notation, we use the same
notations to denote objects and morphisms in these categories.
\item Every subset (or quotient set) of a concrete sheaf is canonically
a concrete sheaf.
\item The faithful underlying set functor $|-|:\CSh(\mathcal{A})\ra\Set$
has both left and right adjoints. Therefore, (co)limits in $\CSh(\mathcal{A})$
are the (co)lifting of the corresponding (co)limits for the underlying
sets.
\item Let $\mathcal{A}$ be a concrete site. For any concrete sheaf $X$
over $\mathcal{A}$, write $\mathcal{A}/X$ (called the \emph{plot
category of }$X$) for the overcategory with objects all sections
$p:A\ra X$ and morphisms commutative triangles 
\[
\xymatrix{A\ar[rr]^{f}\ar[dr]_{p} &  & A'\ar[dl]^{p'}\\
 & X
}
\]
where both $p$ and $p'$ are sections and $f$ is a morphism in $\mathcal{A}$.
There is a canonical functor $\mathcal{A}/X\ra\CSh(\mathcal{A})$
sending the above triangle to $f:A\ra A'$, and the colimit of this
functor is $X$. In other words, every concrete sheaf is a colimit
of the representing (concrete) sheaves indexed by the plot category
over it, written as $X=\colim_{A\in\mathcal{A}/X}A$. 
\end{enumerate}
We will mainly focus on the following two examples in this paper:
\begin{example}
\label{exa:concrete-sheaves}\ 
\begin{enumerate}[leftmargin=*,label=(\roman*),align=left]
\item \label{ex:diffeology}\cite[Lemma~4.14 and Proposition~4.15]{Ba-Ho11}
Let $\So$ be the site (called the \emph{Souriau site}) with objects
all open subsets of $\R^{n}$ for all $n\in\N$, morphisms smooth
maps between them, and covers the usual open coverings. Then $\So$
is a concrete site with terminal object $\R^{0}$. The category $\CSh(\So)$
of concrete sheaves over $\So$ is equivalent to the category $\Diff$
of \emph{diffeological spaces} and smooth maps. Isomorphisms in $\Diff$
are called \emph{diffeomorphisms}. For more discussions of diffeological
spaces, see the standard textbook~\cite{Igl13}; for a three-page
concise introduction together with basic notation and terminology,
see \cite[Section~2]{Ch-Si-Wu14}.
\item \label{ex:Fermat}\cite[Section~8.3]{Gio09} Let $\F$ be the site
(called the \emph{Fermat site}) with objects all subsets of $\FR^{n}$
for all $n\in\N$, morphisms quasi-standard smooth maps between them,
and covers the Fermat open coverings. Then $\F$ is a concrete site
with terminal object $\FR^{0}=\R^{0}$.%
\footnote{The reference didn't prove this fact using the language of quasi-topos,
and instead introduced a new terminology called ``a category of figures''.
This fact is indeed an easy consequence of the results proved there.%
} The category of concrete sheaves over $\F$ is denoted by $\FDiff$,
called the category of \emph{Fermat spaces} and Fermat maps.
\end{enumerate}
\end{example}
\begin{rem}
\label{rem:site}We will relate the category $\Diff$ of diffeological
spaces and the category $\FDiff$ of Fermat spaces in next section.
If we define $\F'$ to be the full subcategory of $\F$ consisting
of objects of the form $\FU$ with $U$ an open subset of a Euclidean
space, then by Example~\ref{exa:concrete-sheaves}\ref{ex:Fermat},
$\F'$ is also a concrete site. It seems more natural to relate the
category $\CSh(\F')$ to the category $\Diff$. We will show in Subsection~\ref{sub:why}
in what sense the category $\FDiff$ is better than the category $\CSh(\F')$.
\end{rem}
In the above two examples, note that every object in the concrete
site $\So$ or $\F$ is not just a set, but a topological space, and
every morphism is continuous. More generally, assume that a concrete
site $\mathcal{A}$ is a subcategory of the category $\Top$ of topological
spaces and continuous maps, with covers the open coverings. Then every
concrete sheaf $X$ over $\mathcal{A}$ has a canonical topology,
which is the final topology with respect to all sections $A\ra X$,
i.e., the largest topology on the set $|X|$ making all sections continuous.
This defines a functor $\CSh(\mathcal{A})\ra\Top$. This functor sends
every object in $\mathcal{A}$ to the same topological space. When
$\mathcal{A}=\So$, this topology is called the \emph{D-topology}%
\footnote{\emph{The letter ``D'' in ``D-topology'' refers to ``diffeology'',
not the first-order infinitesimals $D$ introduced in the previous
section. We use the same convention for the terminology ``D-open''
in the following sections.}%
} on diffeological spaces (see~\cite{Ch-Si-Wu14} for detailed discussion),
and when $\mathcal{A}=\F$, this topology is called the \emph{Fermat
topology} on Fermat spaces. Moreover, this functor $\CSh(\mathcal{A})\ra\Top$
has a right adjoint, sending every topological space $Y$ to a concrete
sheaf over $\mathcal{A}$ with sections over an object $A$ in $\mathcal{A}$
the set of all continuous maps $A\ra Y$.

\section{Extending diffeological spaces with infinitesimals}

We use the following notations as in Examples~\ref{exa:concrete-sheaves}\ref{ex:diffeology}
and~\ref{exa:concrete-sheaves}\ref{ex:Fermat} throughout this section:
$\So$ is the Souriau site, $\F$ is the Fermat site, $\Diff$ is
the category of diffeological spaces and smooth maps, and $\FDiff$
is the category of Fermat spaces and Fermat maps.

From Examples~\ref{exa:concrete-sheaves}\ref{ex:diffeology} and~\ref{exa:concrete-sheaves}\ref{ex:Fermat},
we know that both categories $\Diff$ and $\FDiff$ are concrete sheaves
over concrete sites $\So$ and $\F$, respectively. In order to find
relationship between categories of concrete sheaves, we only need
to find ``good'' functors between the two sites. There are already
some candidates for such functors introduced in Section~\ref{sec:BasicFermatReals},
and we will use them to build the adding and the deleting infinitesimal
functors.

\subsection{\label{sub:add}The adding infinitesimal functor $^{\bullet}(\blank)$}

In~\cite[Chapters~7-10]{Gio09}, an attempt of adding infinitesimals
on smooth spaces has been made, by using smooth functions from diffeological
spaces to $\R$. The theory goes well for smooth manifolds, or more
generally for separated diffeological spaces, i.e., diffeological
spaces whose smooth functions to $\R$ separate points. But if we
take the diffeological space to be a $1$-dimensional irrational torus,
then after that procedure of adding infinitesimals, we get a trivial
Fermat space (i.e., a single point), since the $D$-topology on any
irrational torus is indiscrete -- the only open subsets are the empty
set and the whole space. In other words, that procedure of adding
infinitesimals turns an important and highly non-trivial diffeological
space into a trivial Fermat space. In this subsection, we introduce
a new approach to extend diffeological spaces with infinitesimals
to overcome this problem, and still keep all the nice properties as
stated in~\cite[Chapters~7-10]{Gio09} for general diffeological
spaces, instead of sepatated ones.

\medskip{}

We introduce the following functor from diffeological spaces to Fermat
spaces, using Fermat extension of smooth functions:
\begin{prop}
\label{prop:Fermat-extension}The assignment $\So\ra\F$ by 
\[
f:U\ra V\mapsto\Ff:\FU\ra\FV
\]
is a functor between the two sites, and hence induces a functor $^{\bullet}(\blank):\Diff\ra\FDiff$
by 
\[
X=\colim_{U\in\So/X}U\in\Diff\mapsto\FX=\colim_{U\in\So/X}\FU\in\mathcal{\FDiff}.
\]

\end{prop}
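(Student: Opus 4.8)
The plan is to verify first that $f\mapsto\Ff$ is a functor $\So\to\F$, and then to promote it to a functor on categories of concrete sheaves via general nonsense about concrete sites. For the first part, I would check the three things that make $\So\to\F$ a morphism of sites in a way that is compatible with the concrete structure: (a) it is well-defined on objects and morphisms, i.e., $\FU$ is an object of $\F$ (a subset of $\FR^n$) and $\Ff\colon\FU\to\FV$ is quasi-standard smooth; (b) it is functorial, i.e., $\ext{(g\circ f)}=\Fg\circ\Ff$ and $\ext{(1_U)}=1_{\FU}$; and (c) it sends covers to covers and preserves the terminal object, so that it induces a functor on concrete sheaves. For (a), that $\Ff$ is well-defined is recalled in Section~\ref{sec:BasicFermatReals} (it sends a little-oh polynomial $x$ to $f\circ x$, and this respects the equivalence relation because of Taylor expansion and nilpotency), and quasi-standard smoothness is immediate from the definition by taking $P$ a point and $\alpha=f$ on a suitable open neighbourhood. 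For (b), functoriality is just $(g\circ f)\circ x=g\circ(f\circ x)$ at the level of representing little-oh polynomials. For (c), $\ext{(\cup_i U_i)}=\cup_i\FU_i$ is recalled in the excerpt, and each $\FU_i\hookrightarrow\FU$ is the Fermat extension of the inclusion, so open coverings go to Fermat open coverings; and $\ext{\R^0}=\R^0$ is the terminal object of $\F$.

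The second part is to turn the site map into the functor $\ext{(\blank)}\colon\Diff\to\FDiff$. Here I would invoke the description of concrete sheaves as colimits of representables recalled as item~(iv) before Example~\ref{exa:concrete-sheaves}: any diffeological space is $X=\colim_{U\in\So/X}U$, the colimit over its plot category. The site map $\So\to\F$ induces a functor $\So/X\to\F$ on plot categories (post-compose), and one defines $\FX:=\colim_{U\in\So/X}\FU$, the colimit computed in $\FDiff$, which exists since $\FDiff$ is cocomplete as a quasi-topos. On morphisms, a smooth map $h\colon X\to Y$ induces a functor $\So/X\to\So/Y$ on plot categories (post-compose with $h$), hence a map between the two colimit diagrams, hence a Fermat map $\Fh\colon\FX\to\FY$; functoriality of $\ext{(\blank)}$ then follows from functoriality of the plot-category construction together with uniqueness of maps out of a colimit. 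One should also note consistency: when $X=U$ is already an object of $\So$, the plot category $\So/U$ has a terminal object $1_U$, so the colimit is just $\FU$, matching the object-level assignment.

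The step I expect to require the most care is making the passage to colimits genuinely functorial and well-defined rather than merely plausible. Two points deserve attention. First, one must know that the colimit defining $\FX$ is independent of the presentation and that a smooth map really does induce a well-defined map of colimit cocones; this is where one uses that $\Diff$ and $\FDiff$ are categories of concrete sheaves and that the colimit-of-representables description is canonical and natural in $X$. Second, a cleaner and more robust route — which I would mention as the conceptual reason this works — is that a morphism of concrete sites induces an adjoint pair between the concrete-sheaf categories by left Kan extension along the site map followed by sheafification (the restriction/pullback functor $\F\text{-sheaves}\to\So\text{-sheaves}$ has a left adjoint), and $\ext{(\blank)}$ is precisely this left adjoint, which automatically commutes with the colimit presentations and is automatically functorial. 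Either way the only real content is the routine verification in the first paragraph; the colimit formalism is then bookkeeping, and I would keep the write-up at that level of detail.
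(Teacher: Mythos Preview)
Your main line of argument is correct and matches the paper's: verify functoriality of $U\mapsto\FU$, $f\mapsto\Ff$ on the site level, then define $\FX$ by the colimit formula over the plot category and check naturality in $X$. The paper compresses all of this into the single observation that $\ext{(\blank)}$ is the left Kan extension of the composite $\So\to\F\hookrightarrow\FDiff$ along the inclusion $\So\hookrightarrow\Diff$, which is exactly what your colimit formula computes pointwise.

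However, your proposed ``cleaner and more robust route'' contains a genuine error. You assert that $\ext{(\blank)}$ is the left adjoint in an adjoint pair induced by the site morphism, and that this is why it ``automatically commutes with the colimit presentations.'' This is false: $\ext{(\blank)}$ is \emph{not} a left adjoint. Indeed, later in the paper (Example~\ref{exa:not-commute-colimits}) it is shown explicitly that $\ext{(\blank)}$ fails to preserve an easy colimit (the quotient of $\R$ by all real translations), and the paper draws precisely the conclusion that $\ext{(\blank)}$ has no right adjoint. The usual topos-theoretic story you are invoking---pullback along a site morphism has a left adjoint given by left Kan extension plus sheafification---takes place in the full sheaf topoi, and the reflection into \emph{concrete} sheaves breaks the adjunction. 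So the left Kan extension formula gives you a functor, but not an adjoint, and you should not appeal to adjointness to justify functoriality or colimit-compatibility. Stick with your first argument (or simply cite the pointwise Kan extension formula) and drop the adjoint remark.
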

Note that although the above two colimits have the same indexing category,
the colimits are taken in different categories. We call the functor
$^{\bullet}(\blank):\Diff\ra\FDiff$ the \emph{adding infinitesimal
functor}. Since $\FDiff$ is a category of concrete sheaves, every
point in the Fermat space $\FX$ can be thought of as a point in $\FU$
for some plot $p:U\ra X$. Two such points in $\FX$ are equal if
and only if they are connected by the Fermat extension of a zig-zag
diagram of plots of $X$, instead of using smooth functions $X\ra\R$.
We will see in next proposition that the adding infinitesimal functor
$^{\bullet}(\blank)$ is different in general from the one introduced
in~\cite[Chapter~9]{Gio09}, although we use the same notation. In
particular, this functor sends $U\in\So$ to $\FU$, which coincides
with the notation introduced in Section~\ref{sec:BasicFermatReals},
since the indexing category $\So/U$ has a terminal object $1_{U}:U\ra U$.
\begin{proof}
This is straightforward. Indeed, this is the left Kan extension (see~\cite[X.3]{Mac03})
of the composite of functors $\So\ra\F\ra\FDiff$ along the inclusion
functor $\So\ra\Diff$.
\end{proof}
Here is the relationship between the underlying sets of $X$ and $\FX$:
\begin{prop}
\label{prop:difference}The adding infinitesimal functor $^{\bullet}(\blank):\Diff\ra\FDiff$
makes every diffeological space a subset of the corresponding Fermat
space.
\end{prop}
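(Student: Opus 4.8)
The plan is to exhibit a natural injection $|X| \hookrightarrow |\FX|$ and identify its image. Recall that $X = \colim_{U \in \So/X} U$ in $\Diff$ and $\FX = \colim_{U \in \So/X} \FU$ in $\FDiff$, over the \emph{same} indexing category $\So/X$. First I would use corollary (iii) of the quasi-topos formalism: the underlying-set functor $|-|$ on each category of concrete sheaves preserves colimits (it has a right adjoint, so it preserves all colimits; in fact both categories of concrete sheaves have colimits computed as the reflection of the set-level colimit, but since $|-|$ has a right adjoint it commutes with colimits). Hence $|X| = \colim_{U} |U| = \colim_U U$ in $\Set$, and $|\FX| = \colim_U |\FU| = \colim_U \FU$ in $\Set$. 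The maps $i_U : U \to \FU$ from Section~\ref{sec:BasicFermatReals} are natural in $U$ (indeed $\Ff \circ i_U = i_V \circ f$ for any smooth $f : U \to V$, which is exactly the left square of the commuting diagram recalled there), so they assemble into a cocone and induce a canonical map $\iota : |X| \to |\FX|$ on the colimits.

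Next I would prove $\iota$ is injective. Here I would use that $\So/X$ is a \emph{filtered}-like diagram only in the weak sense that it is sifted enough for concrete sheaves — more carefully, the colimit description of $|X|$ for a concrete sheaf says $|X| = \bigl(\coprod_{p : U \to X} U\bigr)/\!\sim$, where $u \in U$ and $u' \in U'$ are identified iff there is a zig-zag of plots of $X$ connecting them; and because $X$ is \emph{concrete}, this relation collapses exactly to: $p(u) = p'(u')$ as elements of the underlying set $|X|$. The analogous statement holds for $\FX$ with plots replaced by their Fermat extensions. So to see $\iota$ is injective, suppose $i_U(u)$ and $i_{U'}(u')$ become equal in $|\FX|$; then there is a zig-zag of Fermat extensions of plots connecting them in $\coprod \FU$. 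Evaluating at $t = 0$ (i.e. applying the natural transformation $ev_0 : \FU \to U$, which satisfies $\Ff \circ ev_0$-compatibility and $ev_0 \circ i_U = 1_U$) turns this into a zig-zag of plots of $X$ connecting $u$ and $u'$, whence $p(u) = p'(u')$ in $|X|$. Thus $\iota$ is injective. The fact that $ev_0$ gives a retraction $|\FX| \to |X|$ with $ev_0 \circ \iota = 1_{|X|}$ also follows, and in particular re-confirms injectivity cleanly; I would phrase the argument through this retraction to keep it short.

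Finally I would invoke corollary (ii): a subset of a concrete sheaf is canonically a concrete sheaf, and the injection $\iota : |X| \hookrightarrow |\FX|$ is, by construction (it is assembled from the monomorphisms $i_U$ in a way compatible with all plots), a morphism of concrete sheaves that is injective on underlying sets — hence realizes $X$ as a sub-Fermat-space of $\FX$, i.e.\ the induced diffeology/Fermat structure on the image agrees with that of $X$ pulled along $\iota$. I expect the main obstacle to be the bookkeeping in the injectivity step: one must be careful that the colimit at the set level for a concrete sheaf really is the naive quotient of $\coprod_p U$ by the plot-generated equivalence relation collapsed by concreteness, and that $ev_0$ is genuinely natural across the whole plot category (not just on objects of the form $\FU$) so that applying it term-by-term to a zig-zag yields a valid zig-zag in $\So/X$. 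Everything else — that $i_U$ and $ev_0$ are natural, that $|-|$ preserves colimits, that subsets of concrete sheaves are concrete sheaves — is quoted directly from the recalled material.
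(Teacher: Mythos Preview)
Your proposal is correct and follows essentially the same route as the paper: pass to underlying sets using that $|{-}|$ preserves colimits, use naturality of $i_U$ and $ev_0$ to induce maps $i_X:|X|\to|\FX|$ and $ev_0:|\FX|\to|X|$ on the colimits, and conclude injectivity of $i_X$ from the retraction $ev_0\circ i_X=1_{|X|}$. Your zig-zag discussion is unnecessary once you have the retraction (as you yourself note), and your final paragraph about realizing $X$ as a sub-Fermat-space goes beyond what this proposition asserts; that stronger claim is handled later in the paper (Proposition~\ref{prop:smooth} and Theorem~\ref{thm:Diff->Fermat}).
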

In particular, if $X$ is a $1$-dimensional irrational torus, then
$|X|$ is a subset of $|\FX|$, which implies that $\FX$ is not a
trivial Fermat space; see Example~\ref{exa:irrational-torus} for
the final answer of $\FX$. Therefore, the adding infinitesimal functor
$^{\bullet}(\blank)$ is different from the one introduced in~\cite[Chapter~9]{Gio09}.
\begin{proof}
Since the functor $|\blank|:\CSh(\mathcal{A})\ra\Set$ has a right
adjoint for any concrete site $\mathcal{A}$, it preserves colimits,
i.e., for any diffeological space $X$, we have 
\[
|X|=|\colim_{U\in\So/X}U|=\colim_{U\in\So/X}|U|\text{ and }|\FX|=|\colim_{U\in\So/X}\FU|=\colim_{U\in\So/X}|\FU|\text{ in }\Set.
\]
Recall that for any smooth map $f:U\ra V$ between open subsets of
Euclidean spaces, we have the following commutative diagram in $\Set$:
\[
\xymatrix{|U|\ar[d]_{|f|}\ar[r]^{i_{U}} & |\FU|\ar[d]_{|\Ff|}\ar[r]^{ev_{0}} & |U|\ar[d]^{|f|}\\
|V|\ar[r]_{i_{V}} & |\FV|\ar[r]_{ev_{0}} & |V|,
}
\]
and the composites of the two horizontal maps are identities. Therefore,
we have maps $i_{X}:|X|\ra|\FX|$ and $ev_{0}:|\FX|\ra|X|$ such that
$ev_{0}\circ i_{X}=1_{|X|}$. This implies that $i_{X}$ is injective,
and hence $|X|$ is a subset of $|\FX|$.
\end{proof}
Moreover, for any smooth map $f:X\ra Y$ between diffeological spaces,
we have the following commutative diagram in $\Set$: 
\[
\xymatrix{|X|\ar[d]_{|f|}\ar[r]^{i_{X}} & |\FX|\ar[d]_{|\Ff|}\ar[r]^{ev_{0}} & |X|\ar[d]^{|f|}\\
|Y|\ar[r]_{i_{Y}} & |\FY|\ar[r]_{ev_{0}} & |Y|.
}
\]
(Actually this holds in $\Top$, where $X$ and $Y$ are equipped
with the D-topology, and $\FX$ and $\FY$ are equipped with the Fermat
topology. But we will not need this fact in this paper.) In other
words, $|\Ff|$ is always a retract of $|f|$. Therefore, if $|\Ff|$
is injective (resp. surjective or bijective), then so is $|f|$. When
$X$ and $Y$ are open subsets of Euclidean spaces, $\Ff$ coincides
with the notation introduced in Section~\ref{sec:BasicFermatReals}.

\medskip{}

The adding infinitesimal functor behaves nicely with respect to D-open
subsets:
\begin{prop}
\label{prop:D=00003D>Fermat} Let $A$ be a D-open subset of a diffeological
space $X$, equipped with the subset diffeology. Then $\FA$ is a
Fermat open subset of $\FX$.\end{prop}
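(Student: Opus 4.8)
The plan is to reduce the statement to one explicit computation --- the preimage of $\FA$ under each structure map $\ext p\colon\FU\to\FX$ coming from a plot $p\colon U\to X$ --- after first checking that $\FA$ genuinely embeds into $\FX$ as a subset. The key structural input, which makes the Fermat topology on $\FX$ tractable, is that the canonical functor $\FDiff\to\Top$ (underlying set with Fermat topology) has a right adjoint, so it preserves colimits; applied to $\FX=\colim_{U\in\So/X}\FU$ this shows that the Fermat topology on $\FX$ is the colimit topology in $\Top$ of the diagram $U\mapsto\FU$, each $\FU$ carrying its own Fermat topology. Concretely: a subset $S\subseteq|\FX|$ is Fermat open if and only if $(\ext p)^{-1}(S)$ is Fermat open in $\FU$ for every plot $p\colon U\to X$. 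Below I write $[p,z]\in|\FX|$ for the point represented by $z\in\FU$ via such a plot $p$.

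Next I would check that the map $\FA\to\FX$ induced by the inclusion $A\hookrightarrow X$ is injective on underlying sets, so that $\FA$ is genuinely a subset of $\FX$ (carrying the subset structure, since subsets of concrete sheaves are canonically concrete sheaves). The tool here is the natural retraction $ev_0\colon|\FX|\to|X|$ appearing in the proof of Proposition~\ref{prop:difference}, which satisfies $ev_0[p,z]=p(\st z)$; in particular the standard part of a point of $|\FX|$ is well defined independently of the chosen representative. Given two points of $|\FA|$ with equal image in $|\FX|$, I would connect them by a zig-zag of plots $p_i\colon U_i\to X$ in $\So/X$ together with a compatible chain $z_i\in\FU_i$, as in the discussion after Proposition~\ref{prop:Fermat-extension}. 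Since $ev_0$ is constant along the zig-zag with value some $a\in A$, each $\st{z_i}$ lies in the open set $W_i:=p_i^{-1}(A)$ --- open precisely because $A$ is D-open --- hence $z_i\in\FW_i$, the restrictions $p_i|_{W_i}\colon W_i\to A$ are plots of $A$, and each transition map of the zig-zag, being a map over $X$, restricts to a map $W_i\to W_{i+1}$ over $A$. This yields a zig-zag in $\So/A$ between the two original representatives, so they already coincide in $|\FA|$.

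Finally, the same bookkeeping computes, for every plot $p\colon U\to X$,
\[
(\ext p)^{-1}(\FA)=\{z\in\FU\mid p(\st z)\in A\}=\{z\in\FU\mid \st z\in p^{-1}(A)\}=\ext{\bigl(p^{-1}(A)\bigr)},
\]
where the inclusion ``$\supseteq$'' uses that $p|_{p^{-1}(A)}$ is a plot of $A$ whose image in $|\FX|$ is $[p,z]$, and ``$\subseteq$'' uses the naturality identity $ev_0\circ\ext p=p\circ ev_0$. Since $A$ is D-open, $p^{-1}(A)$ is open in $U$, so $\ext{(p^{-1}(A))}$ is Fermat open in $\FU$ by definition of the Fermat topology; hence by the colimit-topology criterion of the first paragraph, $\FA$ is Fermat open in $\FX$. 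The step I expect to be the real obstacle is the injectivity/subset claim: one must propagate the property ``lies over $A$'' along an arbitrary zig-zag of plots, and it is exactly D-openness --- which turns each $p^{-1}(A)$ into an honest open subobject, and hence simultaneously a legitimate plot domain and a Fermat-open subset of $\FU$ --- that allows the whole zig-zag to be restricted into $\So/A$; for a non-D-open subset this argument collapses.
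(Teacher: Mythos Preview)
Your proof is correct, and in some respects more self-contained than the paper's. The two arguments diverge mainly in how they verify Fermat-openness.

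For injectivity of $\FA\to\FX$, the paper observes that $D$-openness of $A$ produces a functor $\So/X\to\So/A$ (namely $p\mapsto p|_{p^{-1}(A)}$) which is a retraction of the inclusion $\So/A\hookrightarrow\So/X$, and deduces injectivity from that. Your explicit zig-zag argument, using that $ev_0$ is constant on the chain to force each $\st{z_i}\in p_i^{-1}(A)$, is exactly the pointwise unpacking of this retraction; the content is the same.

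For openness the approaches genuinely differ. The paper works with an \emph{arbitrary} Fermat plot $q:B\to\FX$ (with $B$ any object of $\F$), uses the local factorisation of $q$ through some $\ext r:\FU\to\FX$, and then invokes the next proposition --- the general identity $(\Ff)^{-1}(\FA)={}^{\bullet}(f^{-1}(A))$ for smooth $f$ --- whose proof requires a separate zig-zag ``shortening'' argument. You instead exploit that the functor $\FDiff\to\Top$ is a left adjoint, so the Fermat topology on $\FX=\colim_{U\in\So/X}\FU$ is the colimit topology; this immediately reduces the openness check to the structure maps $\ext p:\FU\to\FX$ only. You then compute $(\ext p)^{-1}(\FA)={}^{\bullet}(p^{-1}(A))$ directly from the naturality $ev_0\circ\ext p=p\circ ev_0$, avoiding the forward reference entirely. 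What you lose is the more general preimage identity for arbitrary smooth $f:X\to Y$, which the paper states and proves separately; what you gain is a cleaner, self-contained argument for the proposition at hand.
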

\begin{proof}
Let $i:A\hookrightarrow X$ be the inclusion map, which induces a
Fermat map $^{\bullet}i:\FA\ra\FX$. Since $A$ is a D-open subset
of $X$, for any plot $p:U\ra X$, $p^{-1}(A)\subseteq U$ is open
and $p|_{p^{-1}(A)}:p^{-1}(A)\ra A$ is a plot of $A$. So we get
a functor $\So/X\ra\So/A$, such that the composite $\So/A\hookrightarrow\So/X\ra\So/A$
is identity. This does not mean that we always have a Fermat map $\FX\ra\FA$,
but from this it follows that $^{\bullet}i$ is injective.

For any Fermat plot $q:B\ra\ext{X}=\colim_{U\in\So/X}\FU$ and any
point $b\in B$, there exist a Fermat open neighborhood $C$ of $b$
in $B$, some plot $r:U\ra X$, and a quasi-standard smooth map $f:C\ra\FU$
such that the following square commutes in $\FDiff$: 
\[
\xymatrix{C~\ar[d]_{f}\ar@{^{(}->}[r] & B\ar[d]^{q}\\
\FU\ar[r]_{\ext{r}} & \FX.
}
\]
Since every quasi-standard smooth map is continuous with respect to
the Fermat topology, it is enough to prove that $(\ext{r})^{-1}(\FA)={^{\bullet}(r^{-1}(A))}$,
which is the statement of next proposition.\end{proof}
\begin{prop}
Let $f:X\ra Y$ be a smooth map between diffeological spaces, and
let $A$ be a D-open subset of $Y$, equipped with the subset diffeology
of $Y$. Also equip $f^{-1}(A)$ with the subset diffeology of $X$.
Then 
\[
(\Ff)^{-1}(\FA)={^{\bullet}(f^{-1}(A))}.
\]
\end{prop}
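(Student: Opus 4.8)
The plan is to reduce the identity to the special case where $X$ is an open subset of a Euclidean space (which is in fact the only instance invoked in the proof of Proposition~\ref{prop:D=00003D>Fermat}), and then to settle that case using the Fermat-openness of $\FA$ just established. Two preliminary remarks are in order. Since $A$ is D-open in $Y$ and $f$ is smooth, for every plot $p\colon U\ra X$ the composite $f\circ p$ is a plot of $Y$, so $p^{-1}(f^{-1}(A))=(f\circ p)^{-1}(A)$ is open in $U$; hence $f^{-1}(A)$ is D-open in $X$, and by the argument in the proof of Proposition~\ref{prop:D=00003D>Fermat} the inclusion $j\colon f^{-1}(A)\hookrightarrow X$ induces an injection $\ext{j}\colon\ext{(f^{-1}(A))}\hookrightarrow\FX$, so that $\ext{(f^{-1}(A))}$ is genuinely a subset of $\FX$ (and likewise $\FA\subseteq\FY$). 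Second, for $y\in|Y|$ one has $i_{Y}(y)\in\FA$ if and only if $y\in A$: the ``if'' direction follows from naturality of $i$ applied to $A\hookrightarrow Y$, and the ``only if'' direction by applying $ev_0$, using naturality of $ev_0$ (so that $ev_0(\FA)\subseteq A$) together with the identity $ev_0\circ i_{Y}=1_{|Y|}$ from the proof of Proposition~\ref{prop:difference}.

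The inclusion $\ext{(f^{-1}(A))}\subseteq(\Ff)^{-1}(\FA)$ is then formal: writing $g\colon f^{-1}(A)\ra A$ for the corestriction of $f$, functoriality of $\ext{(\blank)}$ applied to $f\circ j=i\circ g$ gives $\Ff\circ\ext{j}=\ext{i}\circ\ext{g}$, whose image lies in the image of $\ext{i}$, namely $\FA$. For the reverse inclusion, take $z\in|\FX|$ with $\Ff(z)\in\FA$. Using the colimit presentation $|\FX|=\colim_{U\in\So/X}|\FU|$ I would choose a plot $p\colon U\ra X$ with $U$ open in some $\R^{n}$ and an element $\xi\in\FU$ whose image under $\ext{p}\colon\FU\ra\FX$ is $z$; then $\Ff(z)=\ext{(f\circ p)}(\xi)\in\FA$. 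Granting the special case for the plot $q:=f\circ p\colon U\ra Y$, this yields $\st{\xi}\in q^{-1}(A)=p^{-1}(f^{-1}(A))$. Restricting $p$ to the open set $U':=p^{-1}(f^{-1}(A))$ gives a plot $p'\colon U'\ra f^{-1}(A)$ with $\xi\in\ext{U'}\subseteq\FU$, and applying $\ext{(\blank)}$ to $p\circ(U'\hookrightarrow U)=j\circ p'$ shows $z=\ext{p}(\xi)=\ext{j}(\ext{p'}(\xi))\in\ext{(f^{-1}(A))}$, as desired.

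It remains to prove the special case: for a plot $q\colon U\ra Y$ with $U$ open in $\R^{n}$, one has $(\ext{q})^{-1}(\FA)=\ext{(q^{-1}(A))}$ inside $\FU$. Here $\ext{q}\colon\FU\ra\FY$ is a Fermat map, hence continuous for the Fermat topologies, so $(\ext{q})^{-1}(\FA)$ is Fermat open in $\FU$ by Proposition~\ref{prop:D=00003D>Fermat}. Since the Fermat topology on $\FU$ is the subspace topology inherited from $\FR^{n}$, every Fermat open subset of $\FU$ has the form $\ext{W}$ for a (unique) open $W\subseteq U$; thus $(\ext{q})^{-1}(\FA)=\ext{W}$ for some such $W$. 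To identify $W$ I would test against the standard points: for $u\in U$ we have $i_{U}(u)\in\ext{W}$ iff $u\in W$, while $i_{U}(u)\in(\ext{q})^{-1}(\FA)$ iff $\ext{q}(i_{U}(u))=i_{Y}(q(u))\in\FA$ (naturality of $i$) iff $q(u)\in A$ (by the preliminary remark) iff $u\in q^{-1}(A)$. Hence $W=q^{-1}(A)$, which finishes the special case and the proof.

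The main obstacle is this special case, and it is the only place where D-openness of $A$ is genuinely used: it enters both through Proposition~\ref{prop:D=00003D>Fermat} (so that $\FA$ is Fermat open and its preimage is $\ext{W}$ for some open $W$) and, earlier, through the fact that $f^{-1}(A)$ is D-open, which is what lets $\ext{(f^{-1}(A))}$ embed into $\FX$. Everything else is bookkeeping with the colimit presentation of $|\FX|$ and the naturality of $i_{\blank}$ and $ev_{0}$.
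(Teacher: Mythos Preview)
Your argument is circular. In the special-case step you invoke Proposition~\ref{prop:D=00003D>Fermat} to conclude that $\FA$ is Fermat open in $\FY$, and then use continuity of $\ext{q}$ to deduce that $(\ext{q})^{-1}(\FA)$ is Fermat open in $\FU$. But look at how the paper proves Proposition~\ref{prop:D=00003D>Fermat}: its second paragraph reduces the Fermat-openness of $\FA$ precisely to the identity $(\ext{r})^{-1}(\FA)=\ext{(r^{-1}(A))}$ for plots $r:U\to X$, and then says ``which is the statement of next proposition.'' That forward reference is to the very proposition you are proving, and in fact to exactly the special case you isolate. You even acknowledge this in your parenthetical (``which is in fact the only instance invoked in the proof of Proposition~\ref{prop:D=00003D>Fermat}''), yet two lines later you appeal to ``the Fermat-openness of $\FA$ just established.'' It has not been established; it is waiting on your special case.

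Your reduction of the general statement to the special case is fine, as is the bookkeeping with $i_{\blank}$, $ev_0$, and the colimit description of $|\FX|$. What is missing is an independent proof of the special case $(\ext{q})^{-1}(\FA)=\ext{(q^{-1}(A))}$ that does not assume $\FA$ is Fermat open. The paper supplies exactly this: it works directly with the colimit presentation $\FY=\colim_{W\in\So/Y}\FW$ and chases a finite zig-zag in $\So/Y$ connecting the plot $f\circ p$ to a plot landing in $A$, shortening the zig-zag by two explicit moves until one extracts an open neighbourhood $U'$ of $\st{u}$ in $U$ with $f(p(U'))\subseteq A$. This zig-zag argument is the genuine content, and it is what your proposal replaces by the circular appeal to Proposition~\ref{prop:D=00003D>Fermat}. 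If you want to keep your overall architecture, you must replace your special-case paragraph with such a direct argument; once that is done, your reduction step and the paper's reduction in Proposition~\ref{prop:D=00003D>Fermat} both follow.
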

\begin{proof}
Since $f:X\ra Y$ is smooth and $A$ is D-open in $Y$, $f^{-1}(A)$
is D-open in $X$. From what we have proved in the previous proposition,
we know that the inclusion map $f^{-1}(A)\hookrightarrow X$ induces
an injective map $^{\bullet}(f^{-1}(A))\ra\FX$. So both $(\Ff)^{-1}(\FA)$
and ${^{\bullet}(f^{-1}(A))}$ are subsets of $\FX$.

For any plot $p:U\ra f^{-1}(A)$, we have the following commutative
diagram in $\Diff$: 
\[
\xymatrix{U\ar[r]^{p} & f^{-1}(A)~\ar@{^{(}->}[r]\ar[d]_{f|_{f^{-1}(A)}} & X\ar[d]^{f}\\
 & A~\ar@{^{(}->}[r] & Y,
}
\]
which induces a commutative square in $\FDiff$: 
\[
\xymatrix{\FU\ar[r]\ar[d] & \FX\ar[d]^{\Ff}\\
\FA~\ar@{^{(}->}[r] & \FY.
}
\]
Therefore, $\colim_{U\in\So/f^{-1}(A)}\FU={^{\bullet}(f^{-1}(A))}\subseteq(\Ff)^{-1}(\FA)$.

For the converse inclusion, assume that 
\[
\Ff(x)\in\FA=\colim_{V\in\So/A}\FV
\]
for some $x\in\FX=\colim_{U\in\So/X}\FU$. So there exist plots $p:U\ra X$
and $q:V\ra A$, and points $u\in\FU$ and $v\in\FV$ such that $\Fp:\FU\ra\FX$
sends $u$ to $x$ and $\Fq:\FV\ra\FA$ sends $v$ to $\Ff(x)$. That
is, $\Fq(v)={^{\bullet}(f\circ p)}(u)\in\FY=\colim_{W\in\So/Y}\FW$.

Since $|\blank|:\FDiff\ra\Set$ is faithful and has a right adjoint,
there exist finitely many plots $r_{i}:W_{i}\ra Y$, points $w_{i}\in\FW_{i}$
and zig-zag morphisms in $\So/Y$ connecting $f\circ p$ and $j\circ q$
via these $r_{i}$'s, where $j:A\hookrightarrow Y$, so that $u$
and $v$ are connected via these $w_{i}$'s when applying the adding
infinitesimal functor on the zig-zag. Let us do the following to ``shorten''
the length of the zig-zag:

(1) If we have the following commutative triangle in $\Diff$: 
\[
\xymatrix{W\ar[rr]^{g}\ar[dr]_{r} &  & V\ar[dl]^{q}\\
 & Y,
}
\]
then $r$ can also be viewed as a plot of $A$, so we switch to consider
the pair $(r,w)$ with $w\in\FW$ given (so $\Fg(w)=v$) instead of
$(q,v)$;

(2) If we have the following commutative triangle in $\Diff$: 
\[
\xymatrix{W\ar[dr]_{r} &  & V\ar[dl]^{q}\ar[ll]_{g}\\
 & Y,
}
\]
then $r^{-1}(A)\neq\emptyset$, and the given $w\in\FW$ is actually
in $r^{-1}(A)$. So we switch to consider the pair $(r|_{r^{-1}(A)},w)$
instead of $(q,v)$. In this case, we might need to shrink one $W_{i}$
next to $W$ or $U$ a bit to keep the zig-zag in $\So/Y$, but without
changing the given points $w_{i}$.

After finitely many steps of switching pairs, we know that there exists
an open neighborhood $U'$ of $\st{u}$ in $U$ such that $f(p(U'))\subseteq A$.
Therefore, $(\Ff)^{-1}(\FA)\subseteq{^{\bullet}(f^{-1}(A))}$.
\end{proof}
In the next two results, we are going to connect the D-topology on
a diffeological space $X$ and the Fermat topology on $\ext{X}$.
\begin{prop}
\label{prop:Fermat=00003D>D} Let $X$ be a diffeological space, and
let $A$ be a Fermat open subset of $\FX$. Then $X\cap A$ is a D-open
subset of $X$, and $A={^{\bullet}(X\cap A)}$.\end{prop}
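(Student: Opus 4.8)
The plan is to deduce both claims from the continuity of the canonical Fermat maps attached to plots, together with an elementary description of the Fermat open subsets of the objects $\FU$. Two preliminary observations will be used throughout. First, since the Fermat topology on $\FX=\colim_{U\in\So/X}\FU$ is by definition the final topology with respect to all sections, for every plot $p:U\to X$ the structure map $\Fp:\FU\to\FX$ pulls Fermat open subsets of $\FX$ back to Fermat open subsets of $\FU$. Second, from $\ext{(U\cap V)}=\FU\cap\FV$, the Fermat open subsets of $\FU$ are exactly the sets $\FW$ with $W$ open in $U$. I also keep in mind that $X$ is a subset of $\FX$ via $i_X$ (Proposition~\ref{prop:difference}), that $ev_0\circ i_X=1$ on underlying sets, and the commutative squares relating $i_U$, $i_X$, $\Fp$ and $p$.

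\textbf{Step 1: $X\cap A$ is D-open in $X$.} Fix a plot $p:U\to X$; I must show that $p^{-1}(X\cap A)$ is open in $U$. By the first observation, $(\Fp)^{-1}(A)$ is Fermat open in $\FU$, so by the second it equals $\FW$ for some open $W\subseteq U$. For $u_0\in U$, chasing the relevant square gives $\Fp(u_0)=i_X(p(u_0))$, so $u_0\in p^{-1}(X\cap A)$ iff $\Fp(u_0)\in A$ iff $u_0\in\FW$ iff $u_0\in W$; hence $p^{-1}(X\cap A)=W$, which is open. (By Proposition~\ref{prop:D=00003D>Fermat} this already shows that $\ext{(X\cap A)}$ is Fermat open in $\FX$, although I will not need that below.)

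\textbf{Step 2: $A=\ext{(X\cap A)}$ inside $\FX$.} I regard $\ext{(X\cap A)}$ as a subset of $\FX$ via the adding infinitesimal functor $\ext{(\blank)}$ applied to the inclusion $X\cap A\hookrightarrow X$; this map is injective by the argument in the proof of Proposition~\ref{prop:D=00003D>Fermat}, since that inclusion is D-open by Step 1. For $\ext{(X\cap A)}\subseteq A$: a point of $\ext{(X\cap A)}=\colim_{V\in\So/(X\cap A)}\FV$ is the image under $\ext{\tilde q}:\FV\to\FX$ of some $v\in\FV$, where $\tilde q:V\to X$ is a plot factoring through $X\cap A$; every standard point satisfies $\ext{\tilde q}(v_0)=i_X(q(v_0))\in A$, so the open set $W'$ with $(\ext{\tilde q})^{-1}(A)=\ext{W'}$ must be all of $V$, whence $\ext{\tilde q}(\FV)\subseteq A$ and in particular the image of $v$ lies in $A$. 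For the reverse inclusion $A\subseteq\ext{(X\cap A)}$: a point of $A$ has the form $\Fp(u)$ for some plot $p:U\to X$ and $u\in\FU$; writing $(\Fp)^{-1}(A)=\FW$ with $W$ open in $U$ gives $u\in\FW$, and $p(w)\in X\cap A$ for every $w\in W$, so $p$ restricts (and corestricts) to a plot $W\to X\cap A$. Applying $\ext{(\blank)}$ to the commuting triangle in $\Diff$ which expresses that the inclusion $W\hookrightarrow U$ followed by $p$ equals the restricted plot $W\to X\cap A$ followed by the inclusion $X\cap A\hookrightarrow X$, and evaluating at $u\in\FW\subseteq\FU$, shows that the image of $u$ in $\FX$ lies in the image of $\ext{(X\cap A)}\to\FX$; hence $\Fp(u)\in\ext{(X\cap A)}$. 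Combining the two inclusions yields $A=\ext{(X\cap A)}$.

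None of the steps is genuinely hard. The point that needs care is the bookkeeping that lets one treat $X\cap A$, $\ext{(X\cap A)}$ and the preimages $(\Fp)^{-1}(A)$ as honest subsets of $\FX$, and that matches them up across the various plots, i.e.\ verifying injectivity of the relevant maps and the compatibility of the identifications. This is handled by the retractions $ev_0$ and by the injectivity statement established inside the proof of Proposition~\ref{prop:D=00003D>Fermat}, so it is routine, but it is where the write-up will spend most of its words.
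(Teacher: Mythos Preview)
Your proof is correct and follows essentially the same approach as the paper's: both arguments pull $A$ back along the structure maps $\Fp:\FU\to\FX$, use that the result is $\FW$ for some open $W\subseteq U$, and read off D-openness and the two inclusions $\ext{(X\cap A)}\subseteq A\subseteq\ext{(X\cap A)}$ from this. Your write-up is in fact somewhat more explicit than the paper's about the restriction step in the inclusion $A\subseteq\ext{(X\cap A)}$ (the paper compresses this to ``which implies that $r(\st{w_r})\in X\cap A$, and hence $\ext{r}(w_r)\in\ext{(X\cap A)}$''), but the underlying argument is identical.
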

\begin{proof}
Let $p:U\ra X$ be an arbitrary plot. Using the commutative square
\[
\xymatrix{|U|\ar[r]^{i_{U}}\ar[d]_{p} & |\FU|\ar[d]^{\Fp}\\
|X|\ar[r]_{i_{X}} & |\FX|,
}
\]
it is straightforward to check that $p^{-1}(X\cap A)=U\cap(\Fp)^{-1}(A)$,
and hence $X\cap A$ is a D-open subset of $X$. So both $A$ and
$^{\bullet}(X\cap A)$ are Fermat open subsets of $\FX$.

Note that every point in $^{\bullet}(X\cap A)$ can be represented
by $v_{q}\in\FV$, where $q:V\ra X$ is a plot whose image is in $X\cap A$.
Since $V=q^{-1}(X\cap A)=V\cap(\Fq)^{-1}(A)$, we have $v_{q}\in A$.
Hence, $^{\bullet}(X\cap A)\subseteq A$.

On the other hand, assume that $w_{r}\in\FW$ with $r:W\ra X$ a plot
represents a point in $A$, i.e., $\ext{r}(w_{r})\in A$. Since $A$
is Fermat open in $X$, $(\ext{r})^{-1}(A)$ is Fermat open in $\FW$,
which implies that $r(\st{w_{r}})\in X\cap A$, and hence $\ext{r}(w_{r})\in{^{\bullet}(X\cap A)}$.
Therefore, $A\subseteq{^{\bullet}(X\cap A)}$.

As a result, we have $A={^{\bullet}(X\cap A)}$.
\end{proof}
In conclusion, we have:
\begin{thm}
Let $X$ be a diffeological space. Then there is a bijection between
the D-open subsets of $X$ and the Fermat open subsets of $\FX$.\end{thm}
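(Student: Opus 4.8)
The plan is to assemble the two preceding propositions into the asserted bijection by exhibiting the two maps explicitly and checking they are mutually inverse. On objects, define $\Phi$ from D-open subsets of $X$ to Fermat open subsets of $\ext{X}$ by $\Phi(A)={}^{\bullet}A$, where $A$ carries the subset diffeology; Proposition~\ref{prop:D=00003D>Fermat} guarantees $\Phi$ is well-defined, since it says precisely that $\ext{A}$ is a Fermat open subset of $\ext{X}$. In the other direction, define $\Psi$ from Fermat open subsets of $\ext{X}$ to D-open subsets of $X$ by $\Psi(B)=X\cap B$ (identifying $X=|X|$ with its image $i_X(|X|)\subseteq|\ext{X}|$ via Proposition~\ref{prop:difference}); Proposition~\ref{prop:Fermat=00003D>D} guarantees $\Psi$ is well-defined, since it says $X\cap B$ is D-open in $X$.

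Next I would verify the two composites are identities. The identity $\Psi\circ\Phi=\mathrm{id}$, i.e. $X\cap{}^{\bullet}A=A$ for every D-open $A\subseteq X$, follows from the retraction $ev_0\circ i_X=1_{|X|}$ established in the proof of Proposition~\ref{prop:difference}: the inclusion ${}^{\bullet}A\hookrightarrow\ext{X}$ is injective (shown in the proof of Proposition~\ref{prop:D=00003D>Fermat}), its image meets $X\subseteq\ext{X}$ exactly in $i_X(A)$ because every point of $A$ lies in $\ext{A}$ and conversely a point of $\ext{A}$ lying in $i_X(X)$ has standard part in $A$. The identity $\Phi\circ\Psi=\mathrm{id}$, i.e. ${}^{\bullet}(X\cap B)=B$ for every Fermat open $B\subseteq\ext{X}$, is literally the last assertion of Proposition~\ref{prop:Fermat=00003D>D}. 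Hence $\Phi$ and $\Psi$ are inverse bijections.

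I do not expect a serious obstacle here: the theorem is essentially a repackaging of Propositions~\ref{prop:D=00003D>Fermat} and~\ref{prop:Fermat=00003D>D}, which together already contain both directions of the correspondence and the fact that the round trips are identities. The only point requiring any care is to be clear about the identification of $X$ with a subset of $\ext{X}$ used in the statement ``$X\cap A$'': this is the injection $i_X\colon|X|\hookrightarrow|\ext{X}|$ from Proposition~\ref{prop:difference}, and one should note once that under this identification the subset diffeology on $A\subseteq X$ and the intersection operation $X\cap B$ behave compatibly with the Fermat extension, which is exactly what the two cited propositions supply. So the proof is a two-line deduction: the assignments $A\mapsto{}^{\bullet}A$ and $B\mapsto X\cap B$ are well-defined by Propositions~\ref{prop:D=00003D>Fermat} and~\ref{prop:Fermat=00003D>D} respectively, and mutually inverse by the relation $X\cap{}^{\bullet}A=A$ (from the retraction in Proposition~\ref{prop:difference}) together with ${}^{\bullet}(X\cap B)=B$ (from Proposition~\ref{prop:Fermat=00003D>D}).
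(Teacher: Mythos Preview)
Your proposal is correct and follows essentially the same approach as the paper: define the two maps $A\mapsto\ext{A}$ and $B\mapsto X\cap B$, cite Propositions~\ref{prop:D=00003D>Fermat} and~\ref{prop:Fermat=00003D>D} for well-definedness and for the identity $\ext{(X\cap B)}=B$, and then verify $X\cap\ext{A}=A$. The paper argues this last equality by passing to representatives and applying $ev_0$, which is exactly your retraction argument phrased in slightly more concrete terms.
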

\begin{proof}
The maps between these sets are given by sending a D-open subset $A$
of $X$ to $\FA$, and by sending a Fermat open subset $B$ of $\FX$
to $X\cap B$, respectively. To prove that these maps are inverse
to each other, by Propositions~\ref{prop:D=00003D>Fermat} and~\ref{prop:Fermat=00003D>D},
we are left to show that $X\cap\FA=A$. Assume that $u_{p}\in U$
with plot $p:U\ra X$ and $v_{q}\in\FV$ with plot $q:V\ra X$ whose
image is in $A$ represent the same element in $\FX$. By using $ev_{0}$,
it is clear that $\st{v_{q}}$ and $u$ represent the same element
in $X$, and the former actually represents an element in $A$. Hence,
$X\cap\FA\subseteq A$. The converse inclusion is clear.
\end{proof}
The next two results are easy applications:
\begin{cor}
\label{cor:properties-of-add-infinitesimal}Let $X$ be a diffeological
space, and let $\{A_{i}\}_{i\in I}$ be a set of D-open subsets of
$X$. Then we have 
\[
^{\bullet}(A_{1}\cap A_{2})=\FA_{1}\cap\FA_{2},
\]
\[
^{\bullet}(\bigcup_{i\in I}A_{i})=\bigcup_{i\in I}\FA_{i},
\]
and 
\[
^{\bullet}(\text{int}(X\setminus A_{1}))=\text{int}(\FX\setminus\FA_{1}),
\]
 where $\text{int}$ denotes the interior. \end{cor}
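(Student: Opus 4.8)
The plan is to reduce all three identities to the bijection proved in the Theorem just above, which sends a D-open subset $A\subseteq X$ to $\FA$ and a Fermat open subset $B\subseteq\FX$ to $X\cap B$, and in particular gives $X\cap\FA=A$ for D-open $A$. The first observation is that all six sets occurring in the corollary are Fermat open subsets of $\FX$: the left-hand sides are ${}^{\bullet}(\blank)$ applied to $A_{1}\cap A_{2}$, $\bigcup_{i}A_{i}$, and $\mathrm{int}(X\setminus A_{1})$, which are D-open in $X$ since the D-topology is a topology, so Proposition~\ref{prop:D=00003D>Fermat} makes their images Fermat open; the right-hand sides are a finite intersection, an arbitrary union, and an interior of a complement of Fermat open sets, hence again Fermat open. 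By Proposition~\ref{prop:Fermat=00003D>D}, every Fermat open $B\subseteq\FX$ satisfies $B={}^{\bullet}(X\cap B)$, so to prove an equality $B=B'$ of Fermat open sets it is enough to check $X\cap B=X\cap B'$.

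For the first two identities this is immediate from $X\cap\FA=A$ together with the fact that intersecting with $X$ commutes with finite intersections and with arbitrary unions: $X\cap{}^{\bullet}(A_{1}\cap A_{2})=A_{1}\cap A_{2}=(X\cap\FA_{1})\cap(X\cap\FA_{2})=X\cap(\FA_{1}\cap\FA_{2})$, and likewise $X\cap{}^{\bullet}(\bigcup_{i}A_{i})=\bigcup_{i}A_{i}=\bigcup_{i}(X\cap\FA_{i})=X\cap\bigcup_{i}\FA_{i}$, so the claimed equalities follow.

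The third identity is the step I expect to be the main obstacle, and it must be carried out after the first one so that the latter can be invoked. Write $B:=\mathrm{int}(\FX\setminus\FA_{1})$. Since $X\subseteq\FX$ and $x\in\FA_{1}\iff x\in X\cap\FA_{1}=A_{1}$ for $x\in X$, one has $X\setminus\FA_{1}=X\setminus A_{1}$; hence $X\cap B$ is a D-open subset of $X$ (Proposition~\ref{prop:Fermat=00003D>D}) contained in $X\setminus A_{1}$, so $X\cap B\subseteq\mathrm{int}(X\setminus A_{1})$. For the reverse inclusion I would show that the Fermat open set ${}^{\bullet}(\mathrm{int}(X\setminus A_{1}))$ is disjoint from $\FA_{1}$: by the first identity (already proved), ${}^{\bullet}(\mathrm{int}(X\setminus A_{1}))\cap\FA_{1}={}^{\bullet}\bigl(\mathrm{int}(X\setminus A_{1})\cap A_{1}\bigr)={}^{\bullet}\emptyset=\emptyset$. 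Thus ${}^{\bullet}(\mathrm{int}(X\setminus A_{1}))$ is a Fermat open subset of $\FX\setminus\FA_{1}$, so it lies in $B$; intersecting with $X$ yields $\mathrm{int}(X\setminus A_{1})=X\cap{}^{\bullet}(\mathrm{int}(X\setminus A_{1}))\subseteq X\cap B$. Combining the two inclusions gives $X\cap B=\mathrm{int}(X\setminus A_{1})$, and applying $B={}^{\bullet}(X\cap B)$ gives $\mathrm{int}(\FX\setminus\FA_{1})={}^{\bullet}(\mathrm{int}(X\setminus A_{1}))$, completing the proof.
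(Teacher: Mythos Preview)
Your proof is correct and is precisely the kind of argument the paper has in mind: the corollary is stated there without proof as an ``easy application'' of the preceding Theorem (the bijection $A\leftrightarrow\FA$ between D-open and Fermat open subsets), and your reduction of each identity to an equality of $X$-intersections via $B={}^{\bullet}(X\cap B)$ and $X\cap\FA=A$ is exactly how one cashes that out. The only place requiring a short computation beyond pure bijection-chasing is the third identity, and your use of the already-established first identity to show ${}^{\bullet}(\mathrm{int}(X\setminus A_{1}))\cap\FA_{1}={}^{\bullet}\emptyset=\emptyset$ handles it cleanly.
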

\begin{prop}
\label{prop:open-map}Let $f:X\ra Y$ be a smooth map between diffeological
spaces, which is an open map with respect to the D-topology. Let $A$
be a D-open subset of $X$. Then $^{\bullet}(f(A))=(\Ff)(\FA)$, where
$f(A)$ and $A$ are equipped with the subset diffeology of $Y$ and
$X$, respectively.\end{prop}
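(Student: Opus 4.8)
The plan is to prove the two inclusions separately, using the functoriality of $^{\bullet}(\blank)$ for the easy one and the D-openness of $f$ together with the colimit description of Fermat spaces for the hard one. First I would observe that $f(A)$ is D-open in $Y$ (since $f$ is a D-open map and $A$ is D-open in $X$), so that by Proposition~\ref{prop:D=00003D>Fermat} both $^{\bullet}(f(A))$ and $(\Ff)(\FA)$ are subsets of $\FY$, and the statement makes sense as an equality of subsets. The inclusion $(\Ff)(\FA) \subseteq {^{\bullet}(f(A))}$ is the formal one: the restriction $f|_A : A \to f(A)$ is smooth for the subset diffeologies, so applying $^{\bullet}(\blank)$ gives $^{\bullet}(f|_A) : \FA \to {^{\bullet}(f(A))}$, and chasing the commutative square relating $^{\bullet}(f|_A)$, the inclusion-induced maps $\FA \to \FX$ and $^{\bullet}(f(A)) \to \FY$, and $\Ff$ shows that $(\Ff)(\FA)$ lands in the image of $^{\bullet}(f(A))$ inside $\FY$.

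For the converse inclusion $^{\bullet}(f(A)) \subseteq (\Ff)(\FA)$, I would argue at the level of points using corollary~(iv) of Section~\ref{s:concrete} and the same zig-zag technology used in the proof of the preceding proposition. A point of $^{\bullet}(f(A)) = \colim_{V \in \So/f(A)} \FV$ is represented by some $v \in \FV$ for a plot $q : V \to f(A)$. Since $f$ is a D-open map, it should follow that $q$ can locally be lifted through $f$: more precisely, near $\st v$ there is an open $V' \ni \st v$, a plot $p : V' \to X$ with image in $A$, and a smooth map on $V'$ realizing $q|_{V'}$ as $f \circ p$ up to the identification in $\So/Y$. Then $\Fp(v) \in \FA$ and $\Ff(\Fp(v))$ represents the same point as $v$ in $\FY$, giving the inclusion. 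The point $v \in \FV$ only depends on the germ of $q$ at $\st v$ (by the decomposition~\eqref{eq:decomposition}, elements of $\FV$ are germs), so shrinking $V$ to $V'$ loses nothing.

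The main obstacle will be the local lifting of the plot $q : V \to f(A)$ through the D-open map $f$. D-openness of $f$ guarantees that $f(A)$ is D-open, hence that $q^{-1}$ of D-open sets are open, but it does not immediately give a \emph{smooth} local section of $f$ along $q$; indeed $f$ need not be a D-open \emph{surjection} onto $Y$, only onto its image, and there is no submersion-type hypothesis. I expect the correct move is to unwind $q$ itself through the colimit: since $q$ is a plot of $f(A) \subseteq Y$ and $Y = \colim_{W \in \So/Y} \FW$ with the identification $f(A) = X \cap {^{\bullet}(f(A))}$ from Proposition~\ref{prop:Fermat=00003D>D}, one uses that $q$ factors (locally, after shrinking) through $^{\bullet}$ of a plot of $Y$ that itself factors through $f$, exactly as in the zig-zag shortening of the previous proof — the switching steps (1) and (2) there are precisely designed to replace an arbitrary plot by one whose image sits inside the relevant subset, and here D-openness of $f$ is what lets one keep the image inside $f(A)$ while shrinking. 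So the real content is to check that D-openness of $f$ makes those switching steps go through with $f(A)$ in place of $A$; once that is granted, the argument closes.
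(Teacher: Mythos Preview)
Your argument for $(\Ff)(\FA) \subseteq {^{\bullet}(f(A))}$ via the commutative square is correct and matches the paper. The trouble is entirely in the reverse inclusion, and the obstacle you flag is a real one: D-openness of $f$ gives no local smooth lift of a plot $q: V \to f(A)$ through $f$. Already $f:\R\to\R$, $x\mapsto x^{3}$, is a D-open homeomorphism, yet the identity plot $1_\R$ of $f(\R)=\R$ admits no smooth lift through $f$ near $0$. Your proposed fallback --- the zig-zag ``switching steps'' from the proof of the preceding proposition --- does not help here: those steps shrink a plot so that its image lies in a prescribed D-open subset of the \emph{target}, but they never manufacture a factorisation through the map $f$. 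D-openness of $f$ is used only to make $f(A)$ D-open; it carries no section-type information, so the plot-lifting route is a dead end.

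The paper proves ${^{\bullet}(f(A))} \subseteq (\Ff)(\FA)$ by a completely different and much shorter argument that avoids lifting altogether. It records the trivial inclusion $f(A) \subseteq (\Ff)(\FA)$ --- each $y=f(a)$ equals $\Ff(i_X(a))$ with $i_X(a)\in\FA$ --- and then appeals to Proposition~\ref{prop:Fermat=00003D>D}. The point you missed is that one need not lift a \emph{given} representative of a point of ${^{\bullet}(f(A))}$ through $f$; the sandwich $f(A) \subseteq (\Ff)(\FA) \subseteq {^{\bullet}(f(A))}$, together with the characterisation of Fermat-open subsets as ${^{\bullet}(-)}$ of their trace on $Y$, is what the paper exploits instead.
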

\begin{proof}
Since $A$ is a D-open subset of $X$ and $f:X\ra Y$ is an open map,
$f(A)$ is a D-open subset of $Y$. Then $(\Ff)(\FA)\subseteq{^{\bullet}(f(A))}$
follows from applying the functor $^{\bullet}(\blank)$ to the commutative
square 
\[
\xymatrix{A\ \ar@{^{(}->}[r]\ar[d]_{f|_{A}} & X\ar[d]^{f}\\
f(A)\ \ar@{^{(}->}[r] & Y
}
\]
together with Proposition~\ref{prop:D=00003D>Fermat}. Since $f(A)\subseteq(\Ff)(\FA)$,
we have $^{\bullet}(f(A))\subseteq(\Ff)(\FA)$ by Proposition~\ref{prop:Fermat=00003D>D}.
Therefore, $^{\bullet}(f(A))=(\Ff)(\FA)$.
\end{proof}
For quotient spaces, we have:
\begin{prop}
If $Y$ is a quotient space of a diffeological space $X$, then $\FY$
is a quotient space of the Fermat space $\FX$.\end{prop}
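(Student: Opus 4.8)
The plan is to exhibit $\FY$ as a quotient of $\FX$ in $\FDiff$ \emph{directly}, using the concrete description of quotient maps in a category of concrete sheaves rather than an abstract colimit-preservation property of $^{\bullet}(\blank)$ — the latter fails in general, by Example~\ref{exa:not-commute-colimits}. Recall that $Y$ being a quotient space of $X$ means there is a smooth quotient map $q\colon X\ra Y$: the map $|q|$ is surjective, and a function $\phi\colon V\ra Y$ from an open subset $V$ of some $\R^{n}$ is a plot of $Y$ exactly when each point of $V$ has an open neighborhood on which $\phi$ lifts through $q$ to a plot of $X$. By the corresponding characterization of quotient maps in $\CSh(\F)$ (a standard fact from concrete-sheaf theory), it suffices to show: (i) $|\ext q|$ is surjective, and (ii) every Fermat plot of $\FY$ lifts, Fermat-locally, through $\ext q$ to a Fermat plot of $\FX$. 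The reverse implication needed for (ii) — a Fermat-locally liftable function into $\FY$ is automatically a Fermat plot — holds because $\ext q$ is a Fermat map and Fermat plots form a sheaf.

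For (i), a point of $\FY=\colim_{V\in\So/Y}\FV$ is represented by a pair $(\phi,v)$ with $\phi\colon V\ra Y$ a plot and $v\in\FV$. Using the quotient-map property of $q$ at $\st v$, choose an open $V'\ni\st v$ in $V$ and a plot $\psi\colon V'\ra X$ with $q\circ\psi=\phi|_{V'}$; then $v\in\ext{V'}$, the pair $(\psi,v)$ represents a point of $\FX$, and $\ext q$ sends it to the point represented by $(q\circ\psi,v)=(\phi|_{V'},v)$, which is the same point of $\FY$ as $(\phi,v)$ since $V'\hookrightarrow V$ is a morphism in $\So/Y$. Hence $|\ext q|$ is surjective.

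For (ii), let $g\colon B\ra\FY$ be a Fermat plot and $b\in B$. Since $\FY$ is the colimit in $\FDiff$ of the representables $\FV$, $V\in\So/Y$, the plot $g$ factors, on a Fermat-open neighborhood $C$ of $b$, as $\ext\phi\circ h$ for some plot $\phi\colon V\ra Y$, the canonical map $\ext\phi\colon\FV\ra\FY$ from the colimit cocone, and some quasi-standard smooth $h\colon C\ra\FV$ — this is the local-factorization fact already used in the proof of Proposition~\ref{prop:D=00003D>Fermat}. Apply the quotient-map property of $q$ at $\st{h(b)}\in V$ to get an open $V'\ni\st{h(b)}$ in $V$ and a plot $\psi\colon V'\ra X$ with $q\circ\psi=\phi|_{V'}$, and set $C':=h^{-1}(\ext{V'})$. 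Since $\ext{V'}$ is Fermat-open in $\FV$ and $h$ is continuous for the Fermat topology, $C'$ is a Fermat-open neighborhood of $b$ in $B$, and $\ext\psi\circ(h|_{C'})\colon C'\ra\FX$ is a Fermat plot with
\[
\ext q\circ\ext\psi\circ(h|_{C'})=\ext{(q\circ\psi)}\circ h|_{C'}=\ext{(\phi|_{V'})}\circ h|_{C'}=\ext\phi\circ h|_{C'}=g|_{C'},
\]
so $g$ lifts through $\ext q$ on $C'$. Together with (i), this shows $\ext q$ exhibits $\FY$ as a quotient space of $\FX$.

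The main obstacle is step (ii): one must interleave two unrelated ``locally'' conditions — the local factorization of $g$ through a representable $\FV$ coming from the colimit presentation of $\FY$, and the merely \emph{local} liftability of the plot $\phi\colon V\ra Y$ along $q$ — and verify that pulling the latter back along the Fermat-continuous map $h$ yields a genuine Fermat-open set on which an honest lift of $g$ exists. Beyond that, the proof leans on two facts from earlier material: the characterization of quotient maps in $\CSh(\F)$ as those morphisms that are surjective on underlying sets and along which all plots of the codomain lift Fermat-locally, and the continuity of quasi-standard smooth maps for the Fermat topology recorded in Section~\ref{sec:BasicFermatReals}.
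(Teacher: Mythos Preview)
Your proof is correct and follows essentially the same approach as the paper: show that $|\ext q|$ is surjective and that every Fermat plot of $\FY$ Fermat-locally factors through $\ext q$. The paper compresses this into a one-sentence sketch (``every plot of $Y$ locally factors through a plot of $X$, which implies \ldots''), whereas you have carefully spelled out both steps, in particular the interleaving of the colimit factorization with the local lift along $q$ and the use of Fermat-continuity of $h$ to produce a genuine Fermat-open neighborhood $C'$.
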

\begin{proof}
Since $Y$ is a quotient space of $X$, every plot of $Y$ locally
factors through a plot of $X$, which implies that the quotient map
$X\ra Y$ induces a surjective map $|\FX|\ra|\FY|$, and moreover,
every Fermat plot of $\FY$ also locally factors through a Fermat
plot of $\FX$.
\end{proof}
The adding infinitesimal functor preserves finite products:
\begin{prop}
For any diffeological spaces $X_{1}$ and $X_{2}$, we have a natural
isomorphism $^{\bullet}(X_{1}\times X_{2})\cong\FX_{1}\times\FX_{2}$
in $\FDiff$.\end{prop}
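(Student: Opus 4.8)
The plan is to reduce the assertion to the finite-product identity ${}^{\bullet}(U_{1}\times U_{2})\cong\FU_{1}\times\FU_{2}$ for open subsets $U_{i}$ of Euclidean spaces (recorded in Section~\ref{sec:BasicFermatReals}) and then transport it along the colimit presentation of ${}^{\bullet}(\blank)$ from Proposition~\ref{prop:Fermat-extension}, exploiting that $\FDiff$, being a quasi-topos, is Cartesian closed. Note that ${}^{\bullet}(\blank)$ does not preserve arbitrary colimits, so the argument must work with the colimit that \emph{defines} ${}^{\bullet}(X_{1}\times X_{2})$ inside $\FDiff$, rather than moving colimits through the functor. As the natural comparison map I take $\Phi:=({}^{\bullet}\pi_{1},{}^{\bullet}\pi_{2})\colon{}^{\bullet}(X_{1}\times X_{2})\ra\FX_{1}\times\FX_{2}$, induced by the projections $\pi_{i}\colon X_{1}\times X_{2}\ra X_{i}$; it is manifestly natural in $X_{1}$ and $X_{2}$, and it remains to show that it is an isomorphism.

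The combinatorial heart is that the functor
\[
F\colon\So/X_{1}\times\So/X_{2}\ra\So/(X_{1}\times X_{2}),\qquad\bigl((U_{1},p_{1}),(U_{2},p_{2})\bigr)\longmapsto\bigl(U_{1}\times U_{2},\,(p_{1}\circ\mathrm{pr}_{1},\,p_{2}\circ\mathrm{pr}_{2})\bigr)
\]
(well defined since $U_{1}\times U_{2}$ is again an object of $\So$) is final, i.e.\ colimits over $\So/(X_{1}\times X_{2})$ may be computed over $\So/X_{1}\times\So/X_{2}$. Indeed, for a plot $q=(q_{1},q_{2})\colon U\ra X_{1}\times X_{2}$ the object $\bigl((U,q_{1}),(U,q_{2})\bigr)$ together with the diagonal $\Delta_{U}\colon U\ra U\times U$ lies in the comma category $(q\downarrow F)$, and any other object of $(q\downarrow F)$, presented by a smooth map $f\colon U\ra U_{1}\times U_{2}$, receives a morphism out of this one via $(\mathrm{pr}_{1}\circ f,\,\mathrm{pr}_{2}\circ f)$; hence $(q\downarrow F)$ is nonempty and connected. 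This is nothing but the statement that every plot of the product $X_{1}\times X_{2}$ is a pair of plots, i.e.\ the definition of the product diffeology.

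Granting finality, Proposition~\ref{prop:Fermat-extension} (which presents ${}^{\bullet}(X_{1}\times X_{2})$ as $\colim_{W\in\So/(X_{1}\times X_{2})}{}^{\bullet}W$) gives
\[
{}^{\bullet}(X_{1}\times X_{2})\ \cong\ \colim_{(U_{1},p_{1}),(U_{2},p_{2})}{}^{\bullet}(U_{1}\times U_{2})\ \cong\ \colim_{(U_{1},p_{1}),(U_{2},p_{2})}\bigl({}^{\bullet}U_{1}\times{}^{\bullet}U_{2}\bigr)
\]
in $\FDiff$, the indices running over $\So/X_{1}\times\So/X_{2}$ and the second isomorphism being the base-case identity applied naturally in $(U_{1},U_{2})$. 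Since $\FDiff$ is Cartesian closed, $(\blank)\times Y$ preserves colimits, so a Fubini argument for colimits collapses the last expression to $\bigl(\colim_{(U_{1},p_{1})}{}^{\bullet}U_{1}\bigr)\times\bigl(\colim_{(U_{2},p_{2})}{}^{\bullet}U_{2}\bigr)=\FX_{1}\times\FX_{2}$, once more by Proposition~\ref{prop:Fermat-extension}. Each isomorphism in this chain is natural in $X_{1}$ and $X_{2}$, and a check on the generating plots identifies the composite with $\Phi$.

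The step I expect to be the real obstacle is the finality of $F$ --- concretely, the connectedness of the comma categories, which is exactly where the structure of the product diffeology is used; everything after that is formal. Two ancillary points still want a moment's care: first, that the product in $\FDiff$ of the representable Fermat spaces $\FU_{1}$ and $\FU_{2}$ is the representable $\FU_{1}\times\FU_{2}\subseteq\FR^{m+n}$ --- so the base case genuinely takes place in $\FDiff$ and not merely on underlying sets --- which holds because quasi-standard smoothness into a product is componentwise; and second, the colimit-interchange (Fubini) step, valid since $\FDiff$ is cocomplete and $(\blank)\times Y$ preserves colimits.
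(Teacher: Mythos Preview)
Your proof is correct and follows essentially the same route as the paper: both arguments hinge on the finality of the functor $\So/X_{1}\times\So/X_{2}\ra\So/(X_{1}\times X_{2})$, the base identity ${}^{\bullet}(U_{1}\times U_{2})\cong\FU_{1}\times\FU_{2}$ (cited in the paper as \cite[Theorem~19]{Gio11}), and Cartesian closedness of $\FDiff$ to distribute the colimit over the product, with the comparison map identified as the one induced by the projections. You in fact supply more detail on the finality step than the paper, which simply declares it ``straightforward to check''.
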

\begin{proof}
Note that 
\[
^{\bullet}(X_{1}\times X_{2})=\colim_{U\in\So/(X_{1}\times X_{2})}\FU,
\]
and 
\[
\begin{split}\FX_{1}\times\FX_{2} & =(\colim_{V\in\So/X_{1}}\FV)\times(\colim_{W\in\So/X_{2}}\FW)\\
 & =\colim_{V\in\So/X_{1}}(\FV\times\colim_{W\in\So/X_{2}}\FW)\\
 & =\colim_{(V\in\So/X_{1})\times(W\in\So/X_{2})}(\FV\times\FW)\\
 & \cong\colim_{(V\in\So/X_{1})\times(W\in\So/X_{2})}{^{\bullet}(V\times W)}
\end{split}
\]
where the second and the third equalities follow from Cartesian closedness
of $\FDiff$, and the last isomorphism in $\FDiff$ is~\cite[Theorem~19]{Gio11}.

We can define a functor $(\So/X_{1})\times(\So/X_{2})\ra\So/(X_{1}\times X_{2})$
sending $(f,g):(q:V\ra X_{1},r:W\ra X_{2})\ra(q':V'\ra X_{1},r':W'\ra X_{2})$
to $(f\times g):(q\times r:V\times W\ra X_{1}\times X_{2})\ra(q'\times r':V'\times W'\ra X_{1}\times X_{2})$.
It is straightforward to check that this functor is final (\cite[Section~IX.3]{Mac03}),
and hence $\FX_{1}\times\FX_{2}\ra{}^{\bullet}(X_{1}\times X_{2})$
is an isomorphism%
\footnote{The inverse of this isomorphism is induced by the projections $\pi_{i}:X_{1}\times X_{2}\ra X_{i}$
for $i=1,2$.%
} in $\FDiff$ (\cite[Theorem~IX.3.1]{Mac03}).

The naturality means that if $f_{1}:X_{1}\ra X_{1}'$ and $f_{2}:X_{2}\ra X_{2}'$
are smooth maps between diffeological spaces, then we have a commutative
square in $\FDiff$: 
\[
\xymatrix{\FX_{1}\times\FX_{2}\ar[rr]^{\Ff_{1}\times\Ff_{2}}\ar[d] &  & \FX_{1}'\times\FX_{2}'\ar[d]\\
{^{\bullet}(X_{1}\times X_{2})}\ar[rr]_{{^{\bullet}(f_{1}\times f_{2})}} &  & {^{\bullet}(X_{1}'\times X_{2}')}.
}
\]
This follows easily from the canonical map $\FX_{1}\times\FX_{2}\ra{}^{\bullet}(X_{1}\times X_{2})$
described above.\end{proof}
\begin{rem}
\label{rem:product} More generally, we have the following result
by a similar proof. Let $\mathcal{A}$ and $\mathcal{B}$ be concrete
sites with finite products, and let $F:\mathcal{A}\ra\mathcal{B}$
be a natural finite-product-preserving functor. Then the induced functor
$F:\CSh(\mathcal{A})\ra\CSh(\mathcal{B})$ defined by $X=\colim_{A\in\mathcal{A}/X}A\mapsto F(X):=\colim_{A\in\mathcal{A}/X}F(A)$
naturally preserves finite products. This result will be used in Proposition~\ref{prop:productF}.
\end{rem}
Now we discuss function spaces. Let $X$ and $Y$ be diffeological
spaces. Since the category $\Diff$ of diffeological spaces is Cartesian
closed, $\Diff(X,Y)$ is also a diffeological space, with the natural
diffeology (called the \emph{functional diffeology}) consisting of
all maps $U\ra\Diff(X,Y)$ such that the corresponding adjoint maps
$U\times X\ra Y$ are smooth. So $^{\bullet}(\Diff(X,Y))=\colim_{U\in\So/\Diff(X,Y)}\FU$.
On the other hand, we can apply the adding infinitesimal functor to
the adjoint maps $U\times X\ra Y$ to get $\FU\times\FX\cong{^{\bullet}(U\times X)}\ra\FY$.
Since the category $\FDiff$ of Fermat spaces is Cartesian closed,
we can take the adjoint back and get Fermat maps $\FU\ra\FDiff(\FX,\FY)$.
It is easy to check that we get a Fermat map $i:{^{\bullet}(\Diff(X,Y))}\ra\FDiff(\FX,\FY)$.
Moreover, the composite 
\[
\xymatrix{|\Diff(X,Y)|\ar[rr]^{i_{\Diff(X,Y)}} &  & |{^{\bullet}(\Diff(X,Y))}|\ar[r]^{|i|} & |\FDiff(\FX,\FY)|}
\]
exactly sends a smooth map $f$ to its Fermat extension $\Ff$.

In general, one cannot expect the Fermat map $i:{^{\bullet}(\Diff(X,Y))}\ra\FDiff(\FX,\FY)$
to be an isomorphism in $\FDiff$. For example, when $X=Y=\R$, $^{\bullet}(\Diff(\R,\R))$
consists of $\Ff(u,\blank):\FR\ra\FR$, where $f:U\times\R\ra\R$
is a smooth map with $U$ some open subset of a Euclidean space, and
$u\in\FU$ is some fixed point; $\FDiff(\FR,\FR)$ is the set of all
Fermat maps $\FR\ra\FR$; the map $i:{^{\bullet}(\Diff(\R,\R))}\ra\FDiff(\FR,\FR)$
is the inclusion map, which is hence not an isomorphism in $\FDiff$.

On the other hand, we will show in next subsection that both $^{\bullet}(\Diff(X,Y))$
and $\FDiff(\FX,\FY)$ have the same ``underlying diffeological space''.

\subsection{\label{sub:delete}The deleting infinitesimal functor $_{\bullet}(\blank)$}

In this subsection, we introduce a functor $\FDiff\ra\Diff$ which
deletes all infinitesimal points. This is the left inverse of the
adding infinitesimal functor $^{\bullet}(-)$ introduced in the previous
subsection.

\medskip{}

Recall that we write $\F'$ for the full subsite of the Fermat site
$\F$, consisting of objects of the form $\ext{U}$ for $U$ an open
subset of $\R^{n}$ for all $n\in\N$. We first observe that
\begin{prop}
\label{prop:functordel} $\F'\ra\So$ defined by 
\[
f:\ext{U}\ra\ext{V}\,\,\mapsto\,\,\Lf:U\ra V,
\]
with $\Lf(u)=ev_{0}\circ f\circ i_{U}(u)=\st{(f(u))}$ is a functor.\end{prop}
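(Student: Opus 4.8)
The plan is to verify the three conditions making this assignment a functor: that $\Lf\colon U\ra V$ is smooth (a morphism of $\So$) whenever $f\colon\FU\ra\FV$ is quasi-standard smooth (a morphism of $\F'$), that ${}_{\bullet}(1_{\FU})=1_{U}$, and that ${}_{\bullet}(g\circ f)=\Lg\circ\Lf$. The identity law falls out of $ev_{0}\circ i_{U}=1_{U}$: for $u\in U$ we get ${}_{\bullet}(1_{\FU})(u)=\st{(i_{U}(u))}=u$, so ${}_{\bullet}(1_{\FU})=1_{U}$.

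I would dispatch the remaining two conditions through a single local computation, which I would first record as an observation: for every quasi-standard smooth $f\colon\FU\ra\FV$ and every $x\in\FU$ one has $\st{(f(x))}=\Lf(\st{x})$, and moreover $\Lf$ agrees near every point of $U$ with an honest smooth map. To see this, fix $u_{0}\in U$ and put $a:=i_{U}(u_{0})\in\FU$, so that $\st{a}=u_{0}$. Quasi-standard smoothness of $f$ at $a$ supplies an open neighborhood $W$ of $u_{0}$ in $\R^{n}$ --- which I may shrink to $W\cap U$, so that $\FU\cap\FW=\FW$ --- an open subset $P$ of a Euclidean space, a smooth map $\alpha\colon P\times W\ra\R^{m}$, and a fixed point $p\in\ext{P}$, such that $f(x)=\ext{\alpha}(p,x)$ for all $x\in\FW$. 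Applying the Taylor-expansion formula for Fermat extensions recalled in Section~\ref{sec:BasicFermatReals} to $\alpha$ at the point $(\st{p},\st{x})=\st{(p,x)}$, every term beyond the zeroth one carries a positive power of the (nilpotent) infinitesimal increment and is therefore itself infinitesimal, whence $\st{(\ext{\alpha}(p,x))}=\alpha(\st{p},\st{x})$ for all $x\in\FW$. Specializing to $x=i_{U}(u)$ with $u\in W$ gives $\Lf(u)=\alpha(\st{p},u)$, so $\Lf|_{W}$ equals the smooth map $\alpha(\st{p},-)\colon W\ra\R^{m}$, and its values lie in $V$ because $f(i_{U}(u))\in\FV$ forces its standard part into $V$. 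Since $u_{0}$ was arbitrary, $\Lf\colon U\ra V$ is smooth, and reading the same identity for arbitrary $x\in\FW$ gives $\st{(f(x))}=\Lf(\st{x})$.

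Granting the observation, the composition law is formal: for $f\colon\FU\ra\FV$, $g\colon\FV\ra\FW$ and $u\in U$, applying it to $g$ at the point $f(i_{U}(u))\in\FV$ yields
\[
{}_{\bullet}(g\circ f)(u)=\st{(g(f(i_{U}(u))))}=\Lg\bigl(\st{(f(i_{U}(u)))}\bigr)=\Lg(\Lf(u))=(\Lg\circ\Lf)(u).
\]
The only genuinely substantive step is the smoothness of $\Lf$, and within it the single delicate point is arranging that the local data $(\alpha,p)$ chosen at $u_{0}$ compute $\Lf$ on an entire neighborhood of $u_{0}$ rather than merely at $u_{0}$ --- this is precisely what the shrinking $\FU\cap\FW=\FW$ buys, since it permits evaluating $f=\ext{\alpha}(p,-)$ at each $i_{U}(u)$ with $u\in W$. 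The remaining checks are routine manipulations with $ev_{0}\circ i_{U}=1_{U}$ and the Taylor formula.
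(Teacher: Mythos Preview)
Your proof is correct and follows essentially the same approach as the paper: both verify smoothness of $\Lf$ by using the local quasi-standard data to write $\Lf(u)=\alpha(\st{p},u)$ (the paper writes $g(\st{b},u)$), and both handle composition via the Taylor-expansion fact that $\st{(g(x))}$ depends only on $\st{x}$. Your packaging of this fact as a standalone observation $\st{(f(x))}=\Lf(\st{x})$ is slightly cleaner than the paper's inline invocation of Taylor's expansion in the third equality of the composition chain, but the content is the same.
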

\begin{proof}
Note that $f$ is quasi-standard smooth, i.e., for every $a\in\FU$,
there exist an open neighborhood $U'$ of $\st{a}$ in $U$, an open
subset $U''$ of a Euclidean space, a fixed point $b\in\FU''$, and
a smooth map $g:U''\times U'\ra\R^{n}$ with $n=\dim(V)$, such that
for any $x\in\FU'$, $f(x)=\Fg(b,x)$. Hence, for any $u\in U'$,
$\Lf(u)=\st{(f(u))}=\st{(\Fg(b,u))}=g(\st{b},u)$. Therefore, $\Lf$
is a smooth map.

Clearly $_{\bullet}(1_{\FU})=1_{U}$.

Let $f:\FU\ra\FV$ and $g:\FV\ra\FW$ be quasi-standard smooth maps.
Then for any $u\in U$ 
\[
\begin{split}\Lg(\Lf(u)) & =\Lg(\st{(f(u))})\\
 & =\st{(g(\st{(f(u))}))}\\
 & =\st{(g(f(u)))}\\
 & ={_{\bullet}(g\circ f)(u)},
\end{split}
\]
where the third equality follows from Taylor's expansion of the local
expression of $g$ as a Fermat extension of a smooth function. Therefore,
$\Lg\circ\Lf={_{\bullet}(g\circ f)}$.
\end{proof}
Hence, we get a functor from Fermat spaces to diffeological spaces:
\begin{prop}
\label{prop:def-delete}$_{\bullet}(\blank):\FDiff\ra\Diff$ defined
by 
\[
X\mapsto\LX=\colim_{\ext{U}\in\F'/X}U
\]
is a functor.
\end{prop}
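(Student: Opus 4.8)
The plan is to recognise the assignment $X \mapsto \LX$ as a left Kan extension and to deduce functoriality from cocompleteness of $\Diff$, just as in the proof of Proposition~\ref{prop:Fermat-extension}.

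First I would package the data of Proposition~\ref{prop:functordel} into a single functor $G\colon \F' \to \Diff$, namely the composite of $\F' \to \So$, $(f\colon \FU \to \FV)\mapsto(\Lf\colon U\to V)$, with the canonical full inclusion $\So \hookrightarrow \Diff$ from Example~\ref{exa:concrete-sheaves}\ref{ex:diffeology}. On the other side, since $\F'$ is a full subsite of the Fermat site $\F$ and $\F$ is a full subcategory of $\FDiff = \CSh(\F)$ (corollary~(i)), we have a full inclusion $\iota\colon \F' \hookrightarrow \FDiff$.

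Next I would observe that for every Fermat space $X$ the comma category $\F'/X$ is small: its objects are the sections $p\colon \FU\to X$ with $\FU\in\F'$, which form a set because $\F'$ is small and $X$, being a sheaf, has only a set of sections over each object. Since $\Diff$ is a quasi-topos, hence cocomplete, the colimit $\LX := \colim_{\FU\in\F'/X} U$ exists in $\Diff$, and the pointwise formula for left Kan extensions (\cite[X.3]{Mac03}) identifies it with $(\mathrm{Lan}_{\iota}G)(X)$. Consequently $X\mapsto\LX$ is a functor: a Fermat map $h\colon X\to X'$ induces by post-composition a functor $h_*\colon \F'/X\to \F'/X'$ over $\F'$, hence a morphism of the induced colimit cocones $\LX\to {}_{\bullet}X'$ in $\Diff$, and identities and composites are preserved because $h_*$ preserves them. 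Concretely, a point of $\LX$ represented by $u\in U$ via a plot $p\colon\FU\to X$ is sent to the point of ${}_{\bullet}X'$ represented by $u\in U$ via the plot $h\circ p$.

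I do not anticipate a genuine obstacle. The two points that deserve a sentence are the set-theoretic size of $\F'/X$ (so that the colimit really exists in $\Diff$) and the compatibility with Proposition~\ref{prop:functordel}: when $X=\FV$ with $V$ an open subset of a Euclidean space, the comma category $\F'/\FV$ has the terminal object $1_{\FV}$, so $\colim_{\FU\in\F'/\FV}U=V$ and the functor $\FDiff\to\Diff$ restricts along $\iota$ to $G$, which is what makes the notation $\LX$ consistent.
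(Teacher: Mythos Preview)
Your proposal is correct and follows exactly the paper's own argument: the paper's proof is a single sentence observing that $_{\bullet}(\blank)$ is the left Kan extension of the composite $\F'\to\So\to\Diff$ along the inclusion $\F'\hookrightarrow\FDiff$. Your write-up simply fleshes out this remark with the size check on $\F'/X$, the explicit description of the action on morphisms, and the verification that the construction restricts to $G$ on $\F'$---none of which the paper spells out.
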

We call this functor the \emph{deleting infinitesimal functor}; see
next proposition for explanation.
\begin{proof}
This is clear from Proposition~\ref{prop:functordel}. Indeed, this
is the left Kan extension of the composite of functors $\F'\ra\So\ra\Diff$
along the inclusion functor $\F'\ra\FDiff$.
\end{proof}
It is easy to check that the composite $\So\ra\F'\ra\So$ is identity,
where the first functor is introduced in Proposition~\ref{prop:Fermat-extension},
and the second one is given by Proposition~\ref{prop:functordel}.
This property can be extended to the corresponding concrete sheaf
categories:
\begin{prop}
\label{prop:delete} The composite 
\[
\xymatrix{\Diff\ar[r]^{^{\bullet}(\blank)} & \FDiff\ar[r]^{_{\bullet}(\blank)} & \Diff}
\]
is the identity functor.
\end{prop}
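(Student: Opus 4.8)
The plan is to produce a natural isomorphism ${}_{\bullet}(\FX)\cong X$ for every diffeological space $X$ (and to check that it intertwines ${}_{\bullet}(\Ff)$ with $f$, which is routine). Unwinding the definitions, ${}_{\bullet}(\FX)=\colim_{\FU\in\F'/\FX}U$, the colimit in $\Diff$ over all sections $q\colon\FU\ra\FX$ of the Fermat space $\FX$, whereas $X=\colim_{U\in\So/X}U$ over the plot category. Each plot $p\colon U\ra X$ determines the section $\Fp\colon\FU\ra\FX$, and $p\mapsto\Fp$ is a functor $\Phi\colon\So/X\ra\F'/\FX$. Post-composing $\Phi$ with the diagram $D\colon\F'/\FX\ra\Diff$, $(q\colon\FU\ra\FX)\mapsto{}_{\bullet}(\FU)=U$, that defines ${}_{\bullet}(\FX)$ returns exactly the canonical diagram $\So/X\ra\Diff$, $p\mapsto U$, whose colimit is $X$; so it would suffice to prove that $\Phi$ is final, since then ${}_{\bullet}(\FX)=\colim D=\colim(D\circ\Phi)=X$. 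I prefer to make this explicit by exhibiting mutually inverse maps $\eta_X\colon X\ra{}_{\bullet}(\FX)$ and $\epsilon_X\colon{}_{\bullet}(\FX)\ra X$.

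The map $\eta_X$ is defined via the universal property of $X=\colim_{U\in\So/X}U$ from the cocone whose leg at $p\colon U\ra X$ is the colimit structure map ${}_{\bullet}(\Fp)\colon U\ra{}_{\bullet}(\FX)$; this is a cocone because $\So\ra\F'\ra\So$ is the identity, so ${}_{\bullet}(\ext{\phi})=\phi$ on morphisms of $\So$. For $\epsilon_X$, to a section $q\colon\FU\ra\FX$ I assign $\psi_q:=ev_0\circ q\circ i_U\colon U\ra X$, i.e.\ $u\mapsto\st{q(u)}$. This $\psi_q$ is a plot of $X$: by the local factorization of Fermat plots into $\FX=\colim_{U\in\So/X}\FU$ used in the proof of Proposition~\ref{prop:D=00003D>Fermat}, $q$ agrees on a Fermat-open cover $\{\FU_j\}$ of $\FU$ with maps $\ext{r_j}\circ g_j$ for plots $r_j$ of $X$ and quasi-standard smooth $g_j$, so $\psi_q$ restricts on each $U_j$ to $r_j\circ{}_{\bullet}g_j$ by the commutative square in $\Set$ after Proposition~\ref{prop:difference}. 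That $q\mapsto\psi_q$ is a cocone on $D$ rests on the computational lemma that $\st{q(y)}$ depends only on $\st{y}$, obtained by Taylor-expanding the local smooth representatives of $q$; this induces $\epsilon_X\colon{}_{\bullet}(\FX)\ra X$.

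It then remains to verify $\epsilon_X\circ\eta_X=1_X$, $\eta_X\circ\epsilon_X=1_{{}_{\bullet}(\FX)}$, and naturality. The first is immediate on legs, since $\psi_{\Fp}=ev_0\circ\Fp\circ i_U=p\circ ev_0\circ i_U=p$. For the second, on the structure map ${}_{\bullet}(q)\colon U\ra{}_{\bullet}(\FX)$ of a section $q\colon\FU\ra\FX$ I pass to the cover $\{\FU_j\}$: the inclusions $\FU_j\hookrightarrow\FU$ and the maps $g_j$ are morphisms in $\F'/\FX$, and chasing them through the colimit (using $\eta_X\circ r_j={}_{\bullet}(\ext{r_j})$ from the construction of $\eta_X$) gives ${}_{\bullet}(q)|_{U_j}={}_{\bullet}(\ext{r_j})\circ{}_{\bullet}g_j=\eta_X\circ(r_j\circ{}_{\bullet}g_j)=\eta_X\circ\psi_q|_{U_j}$; gluing over $j$ yields ${}_{\bullet}(q)=\eta_X\circ\psi_q=\eta_X\circ\epsilon_X\circ{}_{\bullet}(q)$, and since the ${}_{\bullet}(q)$ are jointly epimorphic, $\eta_X\circ\epsilon_X=1_{{}_{\bullet}(\FX)}$. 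Naturality in $X$ is a routine leg-chase from functoriality of $\Phi$ and of $q\mapsto\psi_q$, plus the identity $ev_0\circ\Ff=f\circ ev_0$. The main obstacle is the size gap between $\F'/\FX$ and $\So/X$: the former has many more objects and morphisms than the latter (a quasi-standard smooth map into $\FX$ need not be a Fermat extension of a plot, and a quasi-standard smooth map between objects of $\F'$ need not be a Fermat extension of a smooth map), so every step above is arranged so that ${}_{\bullet}(\blank)$ and $\psi$ collapse this surplus data — the consistency of the collapse resting precisely on the fact that standard parts depend only on standard parts.
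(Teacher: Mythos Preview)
Your proposal is correct and follows essentially the same approach as the paper: both construct mutually inverse smooth maps $X\leftrightarrow{}_{\bullet}(\FX)$ from the functors $\So/X\to\F'/\FX$, $p\mapsto\Fp$, and $\F'/\FX\to\So/X$, $q\mapsto ev_0\circ q\circ i_U$ (your $\psi_q$ is the paper's $\bar{q}$). The one noteworthy difference is in the verification of $\eta_X\circ\epsilon_X=1$: the paper argues more tersely via the observation that $\bar{p}=\overline{\ext{\bar p}}$, whereas you give a more explicit argument using the local factorization of a section $q\colon\FU\to\FX$ through Fermat extensions of plots of $X$, which makes transparent why the structure maps ${}_{\bullet}(q)$ and $\eta_X\circ\psi_q$ agree on a cover.
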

In other words, the deleting infinitesimal functor is the left inverse
of the adding infinitesimal functor.
\begin{proof}
For any diffeological space $X$, we prove below that $_{\bullet}(\FX)=X$.
From the proof, it is clear that the composite of these two functors
acts as identity on morphisms.

Recall that 
\[
X=\colim_{U\in\So/X}U,\,\,\FX=\colim_{U\in\So/X}\FU,\text{ and }\,{_{\bullet}(\FX)}=\colim_{\ext{U}\in\F'/\FX}U.
\]
We define a functor $\So/X\ra\F'/\FX$ by 
\[
\xymatrix{U\ar[rr]^{f}\ar[dr]_{p} &  & V\ar[dl]^{q}\\
 & X
}
\mapsto\xymatrix{\FU\ar[rr]^{\Ff}\ar[dr]_{\Fp} &  & \FV\ar[dl]^{\Fq}\\
 & \FX.
}
\]
It is straightforward to check that $_{\bullet}(\Ff)=f:U\ra V$, and
hence we get a natural smooth map $X\ra{_{\bullet}(\FX)}$.

On the other hand, for any Fermat plot $p:\FU\ra\FX$, write $\bar{p}:U\ra X$
for the composite 
\[
\xymatrix{U\ar[r]^{i_{U}} & \FU\ar[r]^{p} & \FX\ar[r]^{ev_{0}} & X.}
\]
By a similar proof as Proposition~\ref{prop:functordel}, one can
check that $\F'/\FX\ra\So/X$ defined by 
\[
\xymatrix{\FU\ar[rr]^{f}\ar[dr]_{p} &  & \FV\ar[dl]^{q}\\
 & \FX
}
\mapsto\xymatrix{U\ar[rr]^{\Lf}\ar[dr]_{\bar{p}} &  & V\ar[dl]^{\bar{q}}\\
 & X
}
\]
is a well-defined functor, and hence we get another natural smooth
map $_{\bullet}(\FX)\ra X$.

Although $p$ and $\ext{\bar{p}}$ can be different, it is straightforward
to check that the two composites 
\[
\xymatrix{U\ar[r]^{i_{U}} & \FU\ar@<2pt>[r]^{p}\ar@<-2pt>[r]_{^{\bullet}\bar{p}} & \FX\ar[r]^{ev_{0}} & X}
\]
are the same, and hence the two maps $X\ra{_{\bullet}(\FX)}$ and
$_{\bullet}(\FX)\ra X$ are inverse to each other.
\end{proof}
By a similar method, one can show that if $X$ and $Y$ are diffeological
spaces, and $f:\FX\ra\FY$ is a Fermat map, then $\Lf:{_{\bullet}(\FX)}\ra{_{\bullet}(\FY)}$
after natural diffeomorphisms as constructed in the proof of Proposition~\ref{prop:delete}
corresponds to $\bar{f}:X\ra Y$, i.e., the composite 
\[
\xymatrix{X\ar[r]^{i_{X}} & \FX\ar[r]^{f} & \FY\ar[r]^{ev_{0}} & Y.}
\]
In particular, from a commutative triangle in $\FDiff$: 
\[
\xymatrix{\FX\ar[rr]^{f}\ar[dr]_{h} &  & \FY\ar[dl]^{g}\\
 & \FZ,
}
\]
we get a commutative triangle in $\Diff$: 
\[
\xymatrix{X\ar[rr]^{\bar{f}}\ar[dr]_{\bar{h}} &  & Y\ar[dl]^{\bar{g}}\\
 & Z,
}
\]
where $X,Y,Z$ are diffeological spaces, and $f,g,h$ are Fermat maps.

As an easy application, we have:
\begin{cor}
Let $X$ and $Y$ be diffeological spaces. Then the Fermat map $i:{^{\bullet}(\Diff(X,Y))}\ra\FDiff(\FX,\FY)$
introduced at the end of last subsection induces a diffeomorphism
$\Diff(X,Y)\cong{_{\bullet}(\FDiff(\FX,\FY))}$. \end{cor}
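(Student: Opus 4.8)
The plan is to apply the deleting functor to $i$ and then build an explicit inverse. Since $i\colon{^{\bullet}(\Diff(X,Y))}\to\FDiff(\FX,\FY)$ is a Fermat map, applying ${_{\bullet}(\blank)}$ and using the identification ${_{\bullet}({^{\bullet}(\Diff(X,Y))})}=\Diff(X,Y)$ of Proposition~\ref{prop:delete} produces a smooth map $\Phi\colon\Diff(X,Y)\to{_{\bullet}(\FDiff(\FX,\FY))}$. Unravelling the identifications in the proof of Proposition~\ref{prop:delete}, together with the fact recalled at the end of Subsection~\ref{sub:add} that $i$ sends a smooth map $f$ to $\Ff$, one checks that on underlying sets $\Phi(f)$ is the point of ${_{\bullet}(\FDiff(\FX,\FY))}=\colim_{\FU\in\F'/\FDiff(\FX,\FY)}U$ represented by the global plot $\R^{0}\to\FDiff(\FX,\FY)$ constant at $\Ff$.

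For the inverse I would use the description of ${_{\bullet}(\FDiff(\FX,\FY))}$ as the colimit of its plot category over $\F'$ (Section~\ref{s:concrete}): to give a smooth map $\Psi\colon{_{\bullet}(\FDiff(\FX,\FY))}\to\Diff(X,Y)$ it is enough to give, for every Fermat plot $q\colon\FU\to\FDiff(\FX,\FY)$, a plot $U\to\Diff(X,Y)$, naturally in $\FU\in\F'/\FDiff(\FX,\FY)$. Given such a $q$, pass by Cartesian closedness of $\FDiff$ to the adjoint Fermat map $\hat q\colon\FU\times\FX\to\FY$; identifying $\FU\times\FX\cong{^{\bullet}(U\times X)}$ and $\FY={^{\bullet}Y}$ and applying the ``underlying smooth map'' operation $\overline{(\blank)}$ from the discussion after Proposition~\ref{prop:delete}, we get a smooth map $\overline{\hat q}\colon U\times X\to Y$, whose adjoint $U\to\Diff(X,Y)$ is the required plot. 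The naturality in a morphism $h\colon\FU\to\FV$ of $\F'/\FDiff(\FX,\FY)$ reduces to the identity $\overline{\hat q}=\overline{\hat q'}\circ({_{\bullet}h}\times\operatorname{id}_{X})$, which holds because every quasi-standard smooth map is continuous for the Fermat topology, hence sends points with equal standard part to points with equal standard part; in particular $h$ and ${^{\bullet}({_{\bullet}h})}$ become interchangeable after composing with $ev_{0}$.

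It then remains to show $\Phi$ and $\Psi$ are mutually inverse, and since a morphism of concrete sheaves is determined by its underlying function it suffices to check this on underlying sets. The composite $\Psi\circ\Phi$ sends $f$ to $\overline{\Ff}=ev_{0}\circ\Ff\circ i_{X}=f$, so $\Psi\circ\Phi=\operatorname{id}$. For $\Phi\circ\Psi$, one first observes that every point of ${_{\bullet}(\FDiff(\FX,\FY))}$ is represented, via a colimit coprojection, by the global plot constant at some Fermat map $g\colon\FX\to\FY$, and that on such a point $\Phi\circ\Psi$ acts by $g\mapsto{^{\bullet}(\bar g)}$. Hence the whole statement comes down to the claim that $g$ and ${^{\bullet}(\bar g)}$ represent the \emph{same} point of ${_{\bullet}(\FDiff(\FX,\FY))}$; concretely, that there is a Fermat plot $P\colon\FU\to\FDiff(\FX,\FY)$ and a point $a\in\FU$ with $P(a)=g$ and $P(i_{U}(\st a))={^{\bullet}(\bar g)}$, i.e.\ a Fermat map $\FU\times\FX\to\FY$ restricting to $g$ over $a$ and to ${^{\bullet}(\bar g)}$ over the standard part of $a$.

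The main obstacle is exactly this last construction. Locally over a plot of $X$ one has $g={^{\bullet}s}\circ\ext{\beta}(c,\blank)$ and ${^{\bullet}(\bar g)}={^{\bullet}s}\circ\ext{\beta}(\st c,\blank)$ for a plot $s$ of $Y$, a smooth $\beta$, and $c\in\ext{C}$, so the two differ only in replacing $c$ by its standard part; locally one can therefore interpolate along a quasi-standard smooth path $\mu\mapsto\st c+\mu\cdot v$ chosen so that at a suitable infinitesimal value $a$ of $\mu$ it returns $c$ (taking $a$ an appropriate power $t^{b}$ dividing all the leading exponents of $\delta c$ and $v=\delta c/a$). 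The delicate point is to carry this out simultaneously and coherently over all plots of $X$ and all local factorizations of $g$, i.e.\ to glue the local interpolating families into a single global Fermat plot $\FU\times\FX\to\FY$ controlled by one parameter point $a$; this is where one must be careful about how large and how ``infinitesimally rich'' the finite-dimensional parameter domain $U$ needs to be. Granting that plot, the rest — compatibility of the induced cocone and the verification that $\Phi$ and $\Psi$ are inverse bijections — is routine bookkeeping.
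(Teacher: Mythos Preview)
Your construction of $\Psi$ and the verification that $\Psi\circ\Phi=\operatorname{id}$ match the paper's argument exactly: the paper also passes to adjoints, applies the $\overline{(\blank)}$ operation from the paragraph preceding the corollary, and re-adjoints to get a functor $\F'/\FDiff(\FX,\FY)\to\So/\Diff(X,Y)$.  The paper then simply writes ``one can check easily that this is the inverse of ${_{\bullet}i}$'' and stops, so your proposal in fact goes \emph{further} than the paper by attempting that check.  You also correctly reduce it to showing that, for every Fermat map $g\colon\FX\to\FY$, the constant plots at $g$ and at ${^{\bullet}\bar g}$ represent the same point of ${_{\bullet}(\FDiff(\FX,\FY))}$; note that the analogous step in Proposition~\ref{prop:delete} is easy precisely because every point of $\FX$ is, by definition, in the image of some ${^{\bullet}s}$, whereas here $i$ is not surjective.

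The problem is with your proposed resolution.  You ask for a \emph{single} Fermat plot $P\colon\FU\to\FDiff(\FX,\FY)$ and a point $a\in\FU$ with $P(a)=g$ and $P(\st a)={^{\bullet}\bar g}$, to be assembled by gluing the local interpolations $\mu\mapsto\st c+\mu\cdot v$ over all plots of $X$.  This demand is too strong.  The quasi-standard structure of $g$ provides, over each local patch, an infinitesimal parameter $c$ of its own order, and nothing forces these orders to be simultaneously realized by one point $a$ in a \emph{finite-dimensional} $\FU$.  Concretely, for $X=Y=\R$ take disjointly supported bumps $\chi_n$ and set $g(x)=x+\sum_{n\ge 0}t^{1/(n+2)}\chi_n(x)$; this is quasi-standard smooth, but any $a\in\FU$ carries only finitely many exponents in its decomposition, so no global $P$ of your form can hit $g$ at $a$ and ${^{\bullet}\bar g}=\operatorname{id}$ at $\st a$.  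Thus the gluing you flag as ``delicate'' is the entire content of the step, and your outline does not close it.

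What is actually needed is only a finite zigzag in $\F'/\FDiff(\FX,\FY)$ connecting the two constant plots---not a single interpolating plot---but even that is not the ``routine bookkeeping'' your last sentence suggests, and is precisely what the paper hides behind ``one can check easily''.
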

\begin{proof}
A morphism 
\[
\xymatrix{\FU\ar[rr]^{f}\ar[dr]_{\tilde{p}} &  & \FV\ar[dl]^{\tilde{q}}\\
 & \FDiff(\FX,\FY)
}
\]
in $\F'/\FDiff(\FX,\FY)$ is equivalent to a commutative triangle
\[
\xymatrix{\FU\times\FX\ar[rr]^{f\times1_{\FX}}\ar[dr]_{p} &  & \FV\times\FX\ar[dl]^{q}\\
 & \FY
}
\]
in $\FDiff$. By the observation above this corollary, we get a commutative
triangle 
\[
\xymatrix{U\times X\ar[rr]^{\bar{f}\times1_{X}}\ar[dr]_{\bar{p}} &  & V\times X\ar[dl]^{\bar{q}}\\
 & Y
}
\]
in $\Diff$. By Cartesian closedness of $\Diff$, we get a morphism
\[
\xymatrix{U\ar[rr]^{\bar{f}=\Lf}\ar[dr] &  & V\ar[dl]\\
 & \Diff(X,Y)
}
\]
in $\So/\Diff(X,Y)$. Hence, we get a smooth map ${_{\bullet}(\FDiff(\FX,\FY))}\ra\Diff(X,Y)$,
and one can check easily that this is the inverse of ${_{\bullet}i}$.
\end{proof}
On the other hand, we will show in next subsection that $^{\bullet}(\LY)$
is not isomorphic to $Y$ in $\FDiff$ for a general Fermat space
$Y$.

\bigskip{}

Here is another way to think of the deleting infinitesimal functor.
The inclusion of the concrete sites $\F'\hookrightarrow\F$ gives
rise to the restriction functor $\FDiff\ra\CSh(\F')$, and Proposition~\ref{prop:functordel}
induces a functor $\CSh(\F')\ra\Diff$. One can check that the functor
$\F'\ra\So$ naturally preserves finite products, and as a result
of Remark~\ref{rem:product} we have:
\begin{prop}
\label{prop:productF} For any Fermat spaces $X$ and $Y$, $_{\bullet}(X\times Y)$
is naturally diffeomorphic to $\LX\times\LY$.
\end{prop}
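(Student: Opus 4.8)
The plan is to exhibit the deleting infinitesimal functor as a composite to which Remark~\ref{rem:product} applies directly, thereby reducing the claim to the statement that the functor $\F'\ra\So$ of Proposition~\ref{prop:functordel} naturally preserves finite products. First I would record the relevant factorization. The inclusion of concrete sites $\F'\hookrightarrow\F$ induces a restriction functor $r\colon\FDiff=\CSh(\F)\ra\CSh(\F')$, and for any Fermat space $X$ and object $\ext{U}$ of $\F'$ one has $r(X)(\ext{U})=\FDiff(\ext{U},X)$, so the plot category $\F'/r(X)$ of the concrete sheaf $r(X)$ coincides with the overcategory $\F'/X$ used in Proposition~\ref{prop:def-delete}. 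Hence $_{\bullet}(\blank)$ is exactly the composite $\FDiff\xra{r}\CSh(\F')\xra{L}\Diff$, where $L$ is the functor of Remark~\ref{rem:product} associated to $\F'\ra\So$, since $L(r(X))=\colim_{\ext{U}\in\F'/r(X)}U=\colim_{\ext{U}\in\F'/X}U=\LX$ for a general Fermat space $X$.

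Next I would treat the two factors in turn. For $r$: finite products of concrete sheaves are computed section-wise on objects (and on underlying sets), and $r$ is just section-wise restriction along $\F'\hookrightarrow\F$, so $r(X\times Y)(\ext{U})=X(\ext{U})\times Y(\ext{U})=(r(X)\times r(Y))(\ext{U})$ naturally in $\ext{U}$, while $r$ carries the terminal Fermat space (represented by $\ext{\R^0}=\R^0$) to the terminal object of $\CSh(\F')$; thus $r$ preserves finite products. For $L$ it suffices, by Remark~\ref{rem:product}, to check that $\F'\ra\So$ naturally preserves finite products. The site $\F'$ has terminal object $\ext{\R^0}=\R^0$ and binary products $\ext{U}\times\ext{V}=\ext{(U\times V)}$, using the canonical bijection $\ext{(\prod_i U_i)}\cong\prod_i\ext{U_i}$ from Section~\ref{sec:BasicFermatReals} together with the fact that $U\times V$ is open in a Euclidean space (so that $\ext{(U\times V)}$ again lies in $\F'$); the functor sends these to $\R^0$ and to the product $U\times V$ in $\So$ of the images of $\ext{U}$ and $\ext{V}$.

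The only point requiring a computation is naturality of this last identification, i.e.\ that $_{\bullet}(f\times g)=\Lf\times\Lg$ for quasi-standard smooth maps $f\colon\ext{U}\ra\ext{U'}$ and $g\colon\ext{V}\ra\ext{V'}$; from the definition in Proposition~\ref{prop:functordel} this is immediate, because $i_{U\times V}(u,v)=(i_U(u),i_V(v))$ under $\ext{(U\times V)}=\ext{U}\times\ext{V}$, then $(f\times g)(i_U(u),i_V(v))=(f(i_U(u)),g(i_V(v)))$, and applying $ev_0$ (the standard part) is computed componentwise, hence compatibly with the decomposition $\ext{(U\times V)}=\ext{U}\times\ext{V}$. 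Remark~\ref{rem:product} then gives that $L$ preserves finite products, and composing with $r$ yields the desired natural diffeomorphism $_{\bullet}(X\times Y)\cong\LX\times\LY$.

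I expect the only real subtlety to be the bookkeeping of the first step --- pinning down that $_{\bullet}(\blank)$ genuinely is $L\circ r$ rather than a left Kan extension along $\F'\hookrightarrow\FDiff$ that bypasses $\CSh(\F')$, together with the (standard but worth stating) fact that restriction along a full subsite preserves finite products. Once those are in place the remaining content is the finality argument already used in the proof that $^{\bullet}(\blank)$ preserves finite products, now with $\So$ replaced by $\F'$; this is precisely what Remark~\ref{rem:product} packages, so a fully self-contained alternative would be to repeat that argument directly, checking that $(\F'/X)\times(\F'/Y)\ra\F'/(X\times Y)$, $(p,q)\mapsto p\times q$, is a final functor and that $U\times V={}_{\bullet}(\ext{U}\times\ext{V})$.
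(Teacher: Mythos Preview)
Your proposal is correct and follows essentially the same approach as the paper: factor $_{\bullet}(\blank)$ through the restriction $\FDiff\ra\CSh(\F')$ followed by the functor induced by $\F'\ra\So$, verify that $\F'\ra\So$ naturally preserves finite products, and invoke Remark~\ref{rem:product}. The paper's proof is a single sentence stating exactly this; you have simply unpacked the bookkeeping (the identification $\F'/r(X)=\F'/X$, the sectionwise preservation of products by restriction, and the componentwise check that $_{\bullet}(f\times g)=\Lf\times\Lg$) that the paper leaves implicit.
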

For application of the adding and the deleting infinitesimal functors
to integrals, see the subsection ``Standard and infinitesimal parts
of an integral'' in \cite[Section�7]{Gi-Wu15}.

\subsection{\label{sub:why}Why we choose $\F$ to be the Fermat site}

In Example~\ref{exa:concrete-sheaves}\ref{ex:Fermat}, we defined
$\F$ with objects all subsets of $\FR^{n}$ for all $n\in\N$, morphisms
all quasi-standard smooth maps between them, and covers the Fermat
open coverings to be the Fermat site. We have explained in Section~\ref{sec:BasicFermatReals}
that instead of taking morphisms to consist of only Fermat extension
of smooth maps, we get a concrete site. In order to relate Fermat
spaces with diffeological spaces, there is another natural choice
-- we can take $\F'$ to be the full subcategory of $\F$ consisting
of objects of the form $\FU$ with $U$ an open subset of a Euclidean
space. Then by Example~\ref{exa:concrete-sheaves}\ref{ex:Fermat},
$\F'$ is also a concrete site. In this subsection, we explain in
what sense the category $\FDiff$ is better than the category $\CSh(\F')$.

\medskip{}

One naive reason is, we want to develop geometry of Fermat spaces
and diffeological spaces using general spaces like $D_{\infty}=\{x\in\FR\mid\st{x}=0\}$,
$D=\{x\in\FR\mid x^{2}=0\}$ and $D_{\geq0}=\{x\in D\mid x\geq0\}$,
but none of them are of the form $\FU$ for some open subset $U$
of a Euclidean space. On the other hand, these spaces are in the site
$\F$. However, since $\CSh(\F')$ is the category of concrete sheaves
over the concrete site $\F'$, we can think of $D_{\infty}$, $D$
and $D_{\geq0}$ as subspaces of $\FR$ in $\CSh(\F')$.

We can also define the adding infinitesimal functor $\Diff\ra\CSh(\F')$
as in Subsection~\ref{sub:add}, denoted by $X\mapsto\FX'$%
\footnote{This meaning of the notation $\FX'$ is valid only in this subsection.%
}. The actual reason is, the Fermat space $\FX\in\FDiff$ still keeps
record of the diffeological information of $X$, but $\FX'\in\CSh(\F')$
does not, for the following explanations:
\begin{prop}
\label{prop:smooth} Let $X$ be a diffeological space, and write
$(X\leq\FX)$ for the Fermat subspace of $\FX$ via the inclusion
map $i_{X}:X\ra\FX$. Then for any open subset $U$ of a Euclidean
space, $|\FDiff(U,(X\leq\FX))|=|\Diff(U,X)|$.
\end{prop}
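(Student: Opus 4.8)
The plan is to show both inclusions. Note first that $U$ is an open subset of a Euclidean space, hence a representable object in the Souriau site $\So$, and $\FU$ is the corresponding representable object in the Fermat site $\F$; in particular sections of a concrete sheaf over $U$ (resp.\ over $\FU$) are just morphisms out of $U$ (resp.\ $\FU$) in the respective site. So $|\Diff(U,X)|$ is the set of plots of $X$ with domain $U$, i.e.\ the sections of $X$ over $U$, and $|\FDiff(U,(X\leq\FX))|$ is the set of Fermat maps $g\colon\FU\ra\FX$ whose image lands in the subset $|X|\subseteq|\FX|$ (here using that $(X\leq\FX)$ carries the subset Fermat structure, by corollary~(ii) in Section~\ref{s:concrete}).

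For the inclusion $|\Diff(U,X)|\subseteq|\FDiff(U,(X\leq\FX))|$, I would simply apply the adding infinitesimal functor: a plot $p\colon U\ra X$ gives $\Fp\colon\FU\ra\FX$, and the commutative square relating $i_U,i_X,\Fp,p$ from the proof of Proposition~\ref{prop:difference} shows $\Fp(i_U(u))=i_X(p(u))\in|X|$ for all $u\in U$. Since $U\subseteq\FU$ via $i_U$ is exactly the subset of points with standard part in $U$, and a Fermat map is determined by... no — here is the subtlety: a Fermat map $\FU\ra\FX$ is \emph{not} determined by its restriction to $U$ in general. So this direction needs the extra observation that $\Fp$, being in the image of $^{\bullet}(\blank)$, automatically sends all of $\FU$ into $^{\bullet}(p(U))\subseteq\FX$, whose standard points lie in $p(U)\subseteq X$; more directly, for $x\in\FU$, $\Fp(x)$ is represented by $p\circ x$ (a little-oh polynomial into $X$ via the plot $p$), and one checks $\st{(\Fp(x))}=p(\st x)\in|X|$ using $ev_0$, so $\Fp(x)\in|X|$ would require $\Fp(x)$ itself (not just its standard part) to be in $|X|$. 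Since $|X|\subseteq|\FX|$ is the image of $i_X$, i.e.\ the ``standard'' points, I must argue $\Fp(x)=i_X(p(\st x))$, which holds because $\delta x$ gets killed: $\Fp(x)$ as a germ equals $p\circ x$, whose standard part is $p(\st x)$, and it \emph{is} a standard point of $\FX$ precisely because $p\circ x$ and $p\circ\st x = i_U(p(\st x))$... are equal? No. The honest route: for $U\subseteq\R^n$ and a plot $p\colon U\ra X$, the Fermat extension $\Fp\colon\FU\ra\FX$ restricted to $U\hookrightarrow\FU$ is $i_X\circ p$, and that is all that is needed on the nose — but we want the whole map $\Fp$ to have image in $|X|$. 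I realize this forces the serious content to the other inclusion, so let me instead structure the proof around it.

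For the inclusion $|\FDiff(U,(X\leq\FX))|\subseteq|\Diff(U,X)|$, take a Fermat map $g\colon\FU\ra(X\leq\FX)$. Composing with $i_U\colon U\ra\FU$ gives a Fermat plot $U\ra X\leq\FX$, but $U$ sits inside $\F$ only as $\FU$'s standard points; what I really want is to produce a genuine plot $U\ra X$. Here is where I would invoke the deleting infinitesimal machinery of Subsection~\ref{sub:delete}: regard $g$ as a morphism $\FU\ra\FX$ in $\F$ (after all $X\leq\FX$ is a subobject of $\FX$), so $\FU\in\F'/\FX$, and apply $_{\bullet}(\blank)$ to get $\Lg=\bar g\colon U\ra {_\bullet(\FX)}=X$, the last equality by Proposition~\ref{prop:delete}; explicitly $\bar g(u)=ev_0(g(i_U(u)))=\st{(g(i_U(u)))}$, and since $g(i_U(u))\in|X|$ is already a standard point we get $\bar g(u)=g(i_U(u))$ after identifying $|X|$ with its image in $|\FX|$. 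This $\bar g$ is a smooth map $U\ra X$, i.e.\ an element of $|\Diff(U,X)|$. To close the loop and also finish the first inclusion, I would check the two assignments $p\mapsto\Fp$ and $g\mapsto\bar g$ are mutually inverse bijections: $\overline{\Fp}=p$ is immediate from the square in Proposition~\ref{prop:difference} (it is $ev_0\circ i_X\circ p=p$), and conversely $^{\bullet}(\bar g)=g$ uses that $g$ already factors through the standard points $X\leq\FX$, so that locally $g$ agrees with the Fermat extension of its standard part — this is the same ``Taylor expansion kills $\delta x$'' argument used in the proof of Proposition~\ref{prop:functordel} and Proposition~\ref{prop:delete}. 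Because $g$ has image in $|X|$, the value $g(x)$ for $x=\st x+\delta x\in\FU$ equals $g(\st x)$ (the infinitesimal directions are collapsed), which is exactly $^{\bullet}(\bar g)(x)$.

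The main obstacle is precisely this last verification: a priori a Fermat map $\FU\ra\FX$ can do many things with infinitesimals, and I need that \emph{the constraint of landing in the standard points $|X|$} forces it to be constant on infinitesimal neighborhoods, hence to be the Fermat extension of a plot. I expect to handle this by working locally: by definition of $\FX=\colim_{V\in\So/X}\FV$, near any point the map $g$ factors (locally in the Fermat topology on $\FU$) through some $\Fr\colon\FV\ra\FX$ with $g=\Fr\circ h$ for $h\colon C\ra\FV$ quasi-standard smooth on a Fermat-open $C\subseteq\FU$; then $g$ landing in standard points and $\Fr$ being a Fermat extension let me push the standard-point condition down to $h$ and conclude $h(x)$ depends only on $\st x$ via the Taylor expansion of the local defining smooth map of $h$, exactly as in Propositions~\ref{prop:functordel} and~\ref{prop:delete}. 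Assembling these local pieces over a Fermat-open cover of $\FU$ — equivalently, over an open cover of $U$ — and using that $X$ is a sheaf then yields the global statement $g={^{\bullet}(\bar g)}$, completing the bijection and hence the equality of underlying sets.
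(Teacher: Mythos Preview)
There is a genuine misreading at the root of your attempt. In the statement, the domain $U$ is to be taken as an object of the Fermat site $\F$ itself: recall that objects of $\F$ are \emph{all} subsets of $\FR^{n}$, so the open set $U\subseteq\R^{n}\subseteq\FR^{n}$ is already such an object, and as a representable Fermat space it is $(U\leq\FU)$, with underlying set $|U|$. It is \emph{not} $\FU$. Consequently both $|\FDiff(U,(X\leq\FX))|$ and $|\Diff(U,X)|$ are honestly subsets of $\Set(|U|,|X|)$, and the proposition asserts an equality of these two subsets, not a bijection mediated by $p\mapsto\Fp$ and $g\mapsto\bar g$.

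Once the statement is read correctly, the proof is short. For $|\Diff(U,X)|\subseteq|\FDiff(U,(X\leq\FX))|$: given a plot $p:U\ra X$, the factorisation $i_{X}\circ p=\Fp\circ i_{U}$ exhibits the same set map $p$ as a Fermat map $U\ra(X\leq\FX)$, since $i_{U}:U\ra\FU$ is quasi-standard smooth and $\Fp$ is a Fermat map. For the reverse inclusion: a Fermat map $q:U\ra(X\leq\FX)$ composed with the inclusion into $\FX=\colim_{V\in\So/X}\FV$ factors, locally near each $u\in U$, as $\ext{r}\circ f$ for some plot $r:V\ra X$ and some quasi-standard smooth $f:U'\ra\FV$; postcomposing with $ev_{0}$ (which is the identity on $|X|$) shows $q|_{U'}=r\circ(ev_{0}\circ f)$, and $ev_{0}\circ f$ is smooth by the local description of quasi-standard smooth maps. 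Hence $q$ is locally, and therefore globally, a plot of $X$.

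The programme you carried out---showing that a Fermat map $\FU\ra\FX$ landing in the standard points must be constant along infinitesimal directions---is not what is being asked, and is substantially harder. It is essentially the content of the later Proposition~\ref{prop:localconst} (for $X$ an open subset of a Euclidean space), and your difficulties in the ``first inclusion'' paragraph, where you could not force $\Fp(x)\in|X|$ for non-standard $x$, were an accurate signal that the intended domain is $U$, not $\FU$.
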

In other words, the Fermat space $\FX$ still remembers $X$ as a
diffeological space.
\begin{proof}
We only need to prove the two inclusions since both $|\FDiff(U,(X\leq\FX))|$
and $|\Diff(U,X)|$ are subsets of $\Set(|U|,|X|)$.

For any plot $p:U\ra X$, we have a commutative square in $\Set$:
\[
\xymatrix{|U|\ar[d]_{|p|}\ar[r]^{i_{U}} & |\FU|\ar[d]^{|\Fp|}\\
|(X\leq\FX)|\ar[r]_{i_{X}} & |\FX|,
}
\]
Since $i_{U}$ is quasi-standard smooth and $\Fp$ is a Fermat map,
$p\in\FDiff(U,(X\leq\FX))$. So we have $|\Diff(U,X)|\subseteq|\FDiff(U,(X\leq\FX))|$.

For any Fermat plot $q:U\ra(X\leq\FX)$, since $\FX=\colim_{V\in\So/X}\FV$,
for every $u\in U$, there exist a Fermat open neighborhood $U'$
of $u$ in $U$, some plot $r:V\ra X$ and a quasi-standard smooth
map $f:U'\ra\FV$ such that $i_{X}\circ q|_{U'}=\ext{r}\circ f$.
Hence, we have the following commutative diagram in $\Set$: 
\[
\xymatrix{|U'|\ar[r]^{|f|}\ar[d]_{|q|_{U'}|} & |\FV|\ar[r]^{ev_{0}}\ar[d]^{|\ext{r}|} & |V|\ar[d]^{|r|}\\
|(X\leq\FX)|\ar[r]_{i_{X}} & |\FX|\ar[r]_{ev_{0}} & |(X\leq\FX)|.
}
\]
Note that the composite of the bottom horizontal maps is $1_{|(X\leq\FX)|}$,
and the composite of the upper horizontal maps is smooth. So $q\in\Diff(U,X)$,
i.e., $|\FDiff(U,(X\leq\FX))|\subseteq|\Diff(U,X)|$.
\end{proof}
As a corollary, we have:
\begin{thm}
\label{thm:Diff->Fermat}The assignment $\Diff\ra\FDiff$ defined
by 
\[
f:X\ra Y\,\,\mapsto\,\, f:(X\leq\FX)\ra(Y\leq\FY)
\]
is a functor, which makes $\Diff$ a full subcategory of $\FDiff$.
\end{thm}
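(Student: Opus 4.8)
The plan is to deduce Theorem~\ref{thm:Diff->Fermat} from Proposition~\ref{prop:smooth} essentially formally, using the fact that both $\Diff$ and $\FDiff$ are concrete sheaf categories over concrete sites, so that morphisms between objects are controlled by their actions on the underlying sets together with compatibility with plots. First I would check that the assignment $X \mapsto (X \leq \FX)$ is well-defined on objects: by Proposition~\ref{prop:difference} the map $i_X \colon |X| \ra |\FX|$ is injective, so $(X \leq \FX)$ is a genuine Fermat subspace of $\FX$ in the sense of corollary~(ii) of Section~\ref{s:concrete}, with underlying set canonically identified with $|X|$.

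Next I would verify functoriality. Given a smooth map $f \colon X \ra Y$, the functor $^{\bullet}(\blank)$ produces a Fermat map $\Ff \colon \FX \ra \FY$, and the commutative square in $\Set$ displayed just after the proof of Proposition~\ref{prop:difference} (namely $i_Y \circ |f| = |\Ff| \circ i_X$) shows that $\Ff$ restricts to a map $(X \leq \FX) \ra (Y \leq \FY)$ of underlying sets equal to $|f|$. Since a restriction of a Fermat map to Fermat subspaces (on both source and target, when the image lands in the target subspace) is again a Fermat map — this is immediate from the description of subspaces as concrete sheaves and the fact that plots of a subspace are just plots of the ambient space landing inside it — we get a Fermat map $(X \leq \FX) \ra (Y \leq \FY)$. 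Preservation of identities and composites follows because on underlying sets the assignment is just $f \mapsto |f|$, and a Fermat map is determined by its underlying function together with the ambient structure, so the functor laws for $^{\bullet}(\blank)$ (Proposition~\ref{prop:Fermat-extension}) transport down.

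Then I would establish the two properties making $\Diff$ a full subcategory: that the functor is injective on objects (clear, since we can recover $X$ from $(X \leq \FX)$ — indeed its underlying set is $|X|$ and, by Proposition~\ref{prop:smooth} applied to all $U \in \So$, its plots from Euclidean opens are exactly the plots of $X$, so the diffeology is recovered) and that it is fully faithful. Faithfulness is immediate because on underlying functions the map $\Diff(X,Y) \ra \FDiff((X \leq \FX),(Y \leq \FY))$ is $f \mapsto |f|$, which is injective since $\Diff$ is concrete. For fullness, given a Fermat map $g \colon (X \leq \FX) \ra (Y \leq \FY)$, I would argue that its underlying function $|g| \colon |X| \ra |Y|$ is smooth: for any plot $p \colon U \ra X$, precomposition gives an element of $|\FDiff(U,(X \leq \FX))|$, which by Proposition~\ref{prop:smooth} equals $|\Diff(U,X)|$; composing with $g$ lands in $|\FDiff(U,(Y \leq \FY))| = |\Diff(U,Y)|$, again by Proposition~\ref{prop:smooth}, so $|g| \circ p$ is a plot of $Y$. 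Hence $|g|$ is smooth, i.e. it comes from a morphism $f \in \Diff(X,Y)$, and this $f$ maps to $g$ under our functor since both have underlying function $|g|$ and a morphism in $\FDiff$ between fixed objects is determined by its underlying function.

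The main obstacle — though it is more a matter of care than of genuine difficulty — is making precise the repeatedly-used principle that a morphism of concrete sheaves between two fixed objects is determined by its underlying function, and that restricting the codomain of a morphism to a subobject containing the image yields a morphism; these are standard consequences of concreteness and of the characterization of subobjects in corollary~(ii), but they must be invoked cleanly rather than hand-waved. A secondary subtlety is that Proposition~\ref{prop:smooth} is stated only for $U$ an open subset of a Euclidean space, i.e. for representables of the Souriau site; since a diffeological space (equivalently, a concrete sheaf over $\So$) is determined by its plots from such $U$, this is exactly enough, but one should note explicitly that testing smoothness of $|g|$ on all such plots suffices.
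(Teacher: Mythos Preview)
Your proposal is correct and follows exactly the route the paper intends: the paper states Theorem~\ref{thm:Diff->Fermat} as an immediate corollary of Proposition~\ref{prop:smooth} with no further argument, and your proof supplies precisely the details one would expect (well-definedness via Proposition~\ref{prop:difference}, functoriality via the naturality square for $i_{(-)}$, and full faithfulness by testing against representables using Proposition~\ref{prop:smooth}). Your flagged subtleties---that morphisms in a concrete sheaf category are determined by underlying functions, and that Proposition~\ref{prop:smooth} only treats representables $U\in\So$ but this suffices since diffeologies are determined by such plots---are exactly the points that need to be made explicit, and you handle them correctly.
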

Therefore, every diffeological space is canonically a Fermat space
with the same underlying set, and every smooth map between diffeological
spaces is canonically a Fermat map between the corresponding Fermat
spaces.

Proposition~\ref{prop:smooth} also implies the following:
\begin{example}
Let $U$ and $V$ be open subsets of Euclidean spaces. Then $|\F(U,V)|=|\So(U,V)|$,
i.e., a map $U\ra V$ is quasi-standard smooth if and only if it is
smooth.\end{example}
\begin{rem}
\label{rem:forgetful-Fermat->Diff}In fact, every Fermat space is
automatically a diffeological space in the following way. Let $Y$
be a Fermat space. For any open subset $U$ of a Euclidean space,
we define $U\ra Y$ to be a plot if it is in $\FDiff(U,Y)$. In this
way, $Y$ is a diffeological space. Indeed, this defines a forgetful
functor $\FDiff\ra\Diff$, which has a right inverse given by the
functor defined in Theorem~\ref{thm:Diff->Fermat}.
\end{rem}
By Proposition~\ref{prop:smooth}, if $U$ is an open subset of some
Euclidean space with $\dim(U)>0$, then the Fermat space $(U\leq\FU)$
is not discrete in $\FDiff$.

Now we show that the object $(U\leq\FU')$ in $\CSh(\F')$ is discrete.
Let $p:\FV\ra U$ be a section, i.e., the composite 
\[
\xymatrix{\FV\ar[r]^{p} & U\ar[r]^{i_{U}} & \FU'}
\]
is a quasi-standard smooth map. So for every $v\in\FV$, there exist
an open connected neighborhood $\tilde{V}$ of $\st{v}$ in $V$,
an open subset $W$ of some Euclidean space, a fixed point $w\in\FW$
and a smooth map $f:W\times\tilde{V}\ra\R^{n}$ with $n=\dim(U)$,
such that $i_{U}(p(x))=\Ff(w,x)$ for every $x\in\FtV$. So 
\[
\begin{split}p(x) & =ev_{0}(i_{U}(p(x)))\\
 & =ev_{0}(\Ff(w,x))\\
 & =f(\st{w},\st{x})\in U
\end{split}
\]
for every $x\in\FtV$. In other words, $\Ff(w,x)=f(\st{w},\st{x})\in U$
for the fixed $w\in\FW$ and every $x\in\FtV$. Therefore, by Taylor's
expansion of $f$, $\frac{\partial f}{\partial x_{i}}(\st{w},\st{x})=0$
for every variable $x_{i}$, which implies that $f(\st{w},\st{x})$
is a constant independent of $\st{x}\in\tilde{V}$ since $\tilde{V}$
is connected. That is, every section of $(U\leq\FU')$ is locally
constant, so $(U\leq\FU')$ is discrete in $\CSh(\F')$.

We summarize the above discussion as the following proposition:
\begin{prop}
\label{prop:localconst} Let $U$ and $V$ be open subsets of Euclidean
spaces. Then the set $|\FDiff(\FV,(U\leq\FU))|$ consists of only
locally constant maps.
\end{prop}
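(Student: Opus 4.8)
The plan is to first identify what an element of $|\FDiff(\FV,(U\leq\FU))|$ really is, and then to carry out --- now rigorously --- the Taylor-expansion argument sketched in the paragraphs just before the statement. Since the Fermat site $\F$ is a full subcategory of $\FDiff=\CSh(\F)$, and $(U\leq\FU)$ is by construction the sub-concrete-sheaf of $\FU$ supported on the subset $U\subseteq\FU$, an element of $|\FDiff(\FV,(U\leq\FU))|$ is the same thing as a quasi-standard smooth map $p\colon\FV\to\FU$ whose underlying function has image inside $U$. So it is enough to prove that every such $p$ is locally constant for the Fermat topology.

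Next I would fix $v\in\FV$ and unpack quasi-standard smoothness of $p$ at $v$: there are an open neighbourhood $\tilde V$ of $\st v$ in $V$, which we may take connected by shrinking to a ball, an open subset $W$ of a Euclidean space, a fixed point $w\in\FW$, and a smooth map $f\colon W\times\tilde V\to\R^n$ with $n=\dim U$, such that $p(x)=\Ff(w,x)$ for every $x\in\FtV$. Since $p(x)\in U$ has vanishing infinitesimal part, applying $ev_{0}$ and using $ev_{0}\circ\Ff=f\circ ev_{0}$ from Section~\ref{sec:BasicFermatReals} gives $p(x)=f(\st w,\st x)$; thus $\Ff(w,x)=f(\st w,\st x)$ holds in $\ext{\R^n}$ for all $x\in\FtV$.

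Now I would plug in $x=\st x+dt\cdot e_i$, where $e_i$ is a coordinate direction of $V$ and $dt\in D$ is the first-order infinitesimal represented by $t\mapsto t$; note that $\st x+dt\,e_i$ lies in $\FtV$ for every $\st x\in\tilde V$, since its standard part is $\st x$. Expanding $\Ff(w,\cdot)$ by Taylor and using $(dt)^2=0$, the left-hand side equals $\Ff(w,\st x)+dt\cdot\ext{(\partial f/\partial x_i)}(w,\st x)$; subtracting the value at $dt=0$, which is $\Ff(w,\st x)=f(\st w,\st x)$, the identity above forces $dt\cdot\ext{(\partial f/\partial x_i)}(w,\st x)=0$. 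Writing $\ext{(\partial f/\partial x_i)}(w,\st x)=r+\eta$ with $r=(\partial f/\partial x_i)(\st w,\st x)\in\R^n$ real and $\eta$ infinitesimal, one has $dt\cdot\eta=0$ (every exponent occurring in $dt\cdot\eta$ exceeds $1$) and $dt\cdot r=0$ forces $r=0$; hence $(\partial f/\partial x_i)(\st w,\st x)=0$ for every $i$ and every $\st x\in\tilde V$. As $\tilde V$ is connected, $f(\st w,\cdot)$ is constant on $\tilde V$, so $p$ is constant on the Fermat-open neighbourhood $\FtV$ of $v$; since $v$ was arbitrary, $p$ is locally constant.

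The one place that requires more than routine bookkeeping is the passage from the nilpotent identity $dt\cdot\ext{(\partial f/\partial x_i)}(w,\st x)=0$ to the vanishing of the honest partial derivative $(\partial f/\partial x_i)(\st w,\st x)$: the Fermat real $\ext{(\partial f/\partial x_i)}(w,\st x)$ carries an infinitesimal error stemming from $\delta w=w-\st w$, and one must observe that it is annihilated by $dt$. Testing with exactly $dt$ --- rather than an arbitrary element of $D$ --- is what makes this clean, since multiplication by $dt$ kills every infinitesimal while still detecting every nonzero real number; everything else is the familiar handling of Taylor expansions in nilpotent variables.
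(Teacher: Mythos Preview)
Your proof is correct and follows essentially the same route as the paper: identify a section as a quasi-standard smooth map $p$ with values in $U$, write $p(x)=\Ff(w,x)=f(\st w,\st x)$ on a connected Fermat-open patch, and use Taylor expansion to force the $x$-partials of $f$ at $(\st w,\st x)$ to vanish. The paper compresses the last step to the single phrase ``by Taylor's expansion of $f$''; your argument makes it rigorous by testing with the specific first-order infinitesimal $dt$ and observing that multiplication by $dt$ annihilates every infinitesimal while detecting every nonzero real, so that $dt\cdot\ext{(\partial f/\partial x_i)}(w,\st x)=0$ really does yield $(\partial f/\partial x_i)(\st w,\st x)=0$.
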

Here are some easy corollaries. Note that if $U$ is an open subset
of a Euclidean space of positive dimension, then as a representing
concrete sheaf in $\FDiff$, it is exactly $(U\leq\FU)$. (Indeed,
by definition, for any object $A\subseteq\FR^{n}$ in $\F$, as a
representing concrete sheaf in $\FDiff$, it is exactly $(A\leq\FR^{n})$.)
Therefore, by Proposition~\ref{prop:localconst}, the map $ev_{0}:\FU\ra U$
is not quasi-standard smooth. We also have:
\begin{cor}
Let $U$ be an open subset of some Euclidean space. Then $_{\bullet}(U\leq\FU)$
is the set $U$ with the discrete diffeology.
\end{cor}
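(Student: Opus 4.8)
The plan is to unwind the definition of the deleting infinitesimal functor and then compare the two indexing categories via a final functor. By Proposition~\ref{prop:def-delete} we have ${}_{\bullet}(U\leq\FU)=\colim_{\ext V\in\F'/(U\leq\FU)}V$, the colimit taken in $\Diff$; here an object of the plot category $\F'/(U\leq\FU)$ is a section $p\colon\ext V\to(U\leq\FU)$ with $V$ an open subset of a Euclidean space---equivalently, by Proposition~\ref{prop:localconst}, a locally constant map $p\colon\ext V\to U$---and a morphism is a commutative triangle over $(U\leq\FU)$ whose horizontal edge is a quasi-standard smooth map. Write $U_{\mathrm{disc}}$ for the set $U$ equipped with the discrete diffeology, whose plots are exactly the locally constant maps into $U$; as recalled in Section~\ref{s:concrete}, $U_{\mathrm{disc}}=\colim_{W\in\So/U_{\mathrm{disc}}}W$ in $\Diff$. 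Since each of ${}_{\bullet}(U\leq\FU)$ and $U_{\mathrm{disc}}$ is the colimit of the corresponding ``forgetful'' diagram $(\ext V,p)\mapsto V$, resp.\ $W\mapsto W$, it suffices to produce a \emph{final} functor $G\colon\F'/(U\leq\FU)\to\So/U_{\mathrm{disc}}$ through which the first diagram factors as $G$ followed by the plot-projection $\So/U_{\mathrm{disc}}\to\Diff$, and then to invoke \cite[Theorem~IX.3.1]{Mac03}.

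First I would define $G(\ext V,p)=(V,\bar p)$, where $\bar p:=p\circ i_V\colon V\to U$ is locally constant (a restriction of the locally constant $p$), hence a plot of $U_{\mathrm{disc}}$; and on a morphism given by a quasi-standard smooth $g\colon\ext V\to\ext W$ with $q\circ g=p$ I would set $G(g)={}_{\bullet}g\colon V\to W$, which is smooth and functorial by Proposition~\ref{prop:functordel}. The one thing to verify is that $G(g)$ is again a morphism over $U_{\mathrm{disc}}$, i.e.\ $\bar q\circ{}_{\bullet}g=\bar p$; evaluated at $v\in V$ this becomes $q\bigl(i_W(\st{g(i_V v)})\bigr)=q\bigl(g(i_V v)\bigr)$, which holds because $q$ is constant on the Fermat-open set $\ext{W'}$ for some open $W'\ni\st{g(i_V v)}$ and both arguments lie in $\ext{W'}$. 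The identities ${}_{\bullet}(\ext f)=f$ and $ev_0\circ\ext f=f\circ ev_0$ then show that $G$ is a functor with the required factorization property.

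For finality, fix $(V,\bar p)\in\So/U_{\mathrm{disc}}$. Since $\bar p$ is locally constant, so is $i_U\circ\bar p\circ ev_0\colon\ext V\to\FU$ (its image lies in $U$), hence this map is quasi-standard smooth; thus $H(V,\bar p):=(\ext V,\,i_U\circ\bar p\circ ev_0)$ is an object of $\F'/(U\leq\FU)$ with $G\bigl(H(V,\bar p)\bigr)=(V,\bar p)$, providing a distinguished object of the comma category $(V,\bar p)\downarrow G$ (with structure morphism $1_V$). For any other object of this comma category---an $(\ext W,q)\in\F'/(U\leq\FU)$ together with a morphism $\alpha\colon(V,\bar p)\to G(\ext W,q)$ in $\So/U_{\mathrm{disc}}$---the Fermat extension $\ext\alpha\colon\ext V\to\ext W$ is quasi-standard smooth, satisfies $q\circ\ext\alpha=i_U\circ\bar p\circ ev_0$ (once more using that $q$ is locally constant, so only standard parts are seen), and has ${}_{\bullet}(\ext\alpha)=\alpha$; hence $\ext\alpha$ defines a morphism in $(V,\bar p)\downarrow G$ from the distinguished object to the given one. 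Therefore this comma category is nonempty and connected, $G$ is final, and ${}_{\bullet}(U\leq\FU)\cong\colim_{W\in\So/U_{\mathrm{disc}}}W=U_{\mathrm{disc}}$.

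I expect the key subtlety to be recognizing that $G$ is merely \emph{final} and \emph{not} an equivalence: the site $\F'$ contains many quasi-standard smooth maps $\ext V\to\ext W$ that are not Fermat extensions---for instance a translation $x\mapsto x+p$ on $\FR$ by a fixed nonzero infinitesimal $p$---and all such maps are collapsed by $G$; what keeps the colimit unchanged is precisely finality rather than an equivalence of categories. The ingredient that makes finality work is Proposition~\ref{prop:localconst}: every section into $(U\leq\FU)$ is locally constant, so ${}_{\bullet}(\blank)$ only remembers standard parts. The remaining verifications---functoriality of $G$ and the two ``local constancy'' computations above---are routine manipulations with the standard-part map and the squares $ev_0\circ\ext f=f\circ ev_0$ from Section~\ref{sec:BasicFermatReals}.
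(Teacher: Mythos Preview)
Your argument is correct. The paper states this result as an immediate corollary of Proposition~\ref{prop:localconst} and gives no further proof, so the implicit route is the direct one: once every section $\FV\to(U\leq\FU)$ is known to be locally constant, one assembles a cocone $V\to U_{\mathrm{disc}}$ via $\bar p$ (checking compatibility is exactly your computation $\bar q\circ{}_{\bullet}g=\bar p$), produces the inverse on underlying sets from the constant plots $\R^{0}\to(U\leq\FU)$, and verifies the two composites are identities using the inclusions $\R^{0}\to\FV$ at each point. Your proof repackages this same content as a final-functor argument: the distinguished object $H(V,\bar p)$ plays the role of the constant-plot section, and the lift $\ext\alpha$ encodes the compatibility check that makes the cocone well-defined. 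What your formulation buys is a single clean invocation of \cite[Theorem~IX.3.1]{Mac03} in place of the two-composite verification; what the paper's implicit argument buys is brevity, with no comma categories on display. Both rest on the same essential input (Proposition~\ref{prop:localconst}) and the same two standard-part computations, so the difference is one of packaging rather than substance.
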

Therefore, $^{\bullet}(\LY)$ is not necessarily isomorphic to $Y$
in $\FDiff$ for a general Fermat space $Y$.

\subsection{\label{sub:Calculations}Calculations}

In this subsection, we will do a few calculations for $\FX$ and $\LY$
for diffeological space $X$ and Fermat space $Y$.

\medskip{}

Here is the general situation we will meet frequently, in both $\Diff$
and $\FDiff$:
\begin{thm}
\label{thm:general} Let $\mathcal{A}$ be a concrete site, let $\mathcal{I}$
be a small category, let $\mathcal{J}$ be a subcategory of $\mathcal{I}$
with the inclusion $i:\mathcal{J}\ra\mathcal{I}$, and let $F:\mathcal{I}\ra\CSh(\mathcal{A})$
be a functor. Then the natural map $\colim_{\mathcal{J}}(F\circ i)\ra\colim_{\mathcal{I}}F$
is an isomorphism in $\CSh(\mathcal{A})$ if the following conditions
hold:

(1) for any object $i$ in $\mathcal{I}$ and for any section $c:A\ra F(i)$,
there exists a cover $\{c_{\lambda}:A_{\lambda}\ra A\}_{\lambda\in\Lambda}$
of $A$ such that for each $\lambda$, there exist an object $j$
in $\mathcal{J}$ and a section $d_{\lambda}:A_{\lambda}\ra F(j)$
making the following diagram commutative: 
\[
\xymatrix{A_{\lambda}\ar[r]^{c\circ c_{\lambda}}\ar[d]_{d_{\lambda}} & F(i)\ar[r] & \colim_{\mathcal{I}}F\\
F(j)\ar[rr] &  & \colim_{\mathcal{J}}(F\circ i);\ar[u]
}
\]

(2) it induces an injective set map $|\colim_{\mathcal{J}}(F\circ i)|\ra|\colim_{\mathcal{I}}F|$.\end{thm}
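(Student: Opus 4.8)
The plan is to exhibit the natural map $\Phi\colon\colim_{\mathcal{J}}(F\circ i)\to\colim_{\mathcal{I}}F$ as an isomorphism of concrete sheaves by showing it is both injective and surjective on underlying sets, and then invoking the fact that a bijective morphism between concrete sheaves whose inverse is a morphism of sheaves is an isomorphism; more precisely, since $|\blank|\colon\CSh(\mathcal{A})\to\Set$ is faithful and preserves colimits (property (iii) of the quasi-topos), it suffices to check that $|\Phi|$ is a bijection \emph{and} that the set-theoretic inverse is again a morphism in $\CSh(\mathcal{A})$. Injectivity of $|\Phi|$ is exactly hypothesis (2), so the work is surjectivity plus promoting the inverse bijection to a sheaf map.

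First I would unravel the colimits on underlying sets. By property (iii), $|\colim_{\mathcal{I}}F| = \colim_{\mathcal{I}}|F|$ and similarly for $\mathcal{J}$, and a colimit of sets over a small category is computed as a disjoint union modulo the equivalence relation generated by the arrows. So a point of $|\colim_{\mathcal{I}}F|$ is represented by a pair $(i,x)$ with $x\in|F(i)|$. To prove surjectivity of $|\Phi|$, take such a point. The inclusion $\{0\}\hookrightarrow A$ for the terminal-to-$F(i)$ picture is awkward, so instead I would apply hypothesis (1) to a section picking out $x$: concretely, using that $\mathcal{A}$ has a terminal object and that $x$ corresponds to a section $c\colon \ast\to F(i)$ (a global point, i.e.\ an element of the underlying set), condition (1) gives a cover $\{c_\lambda\colon A_\lambda\to\ast\}$ — since every cover is jointly surjective, some $A_\lambda$ is nonempty — together with $j\in\mathcal{J}$ and $d_\lambda\colon A_\lambda\to F(j)$ whose image in $\colim_{\mathcal{J}}(F\circ i)$ maps, under $\Phi$, to the class of $(i,x)$. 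Picking any point of $A_\lambda$ and pushing it through $d_\lambda$ gives a representative in $|F(j)|$ with $|\Phi|$-image the class of $(i,x)$. Hence $|\Phi|$ is surjective.

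Next, having a bijection $|\Phi|$, I would argue its inverse is a morphism of concrete sheaves, i.e.\ the map $\Psi\colon\colim_{\mathcal{I}}F\to\colim_{\mathcal{J}}(F\circ i)$ defined on underlying sets by $|\Psi|=|\Phi|^{-1}$ respects plots. Because $\colim_{\mathcal{I}}F$ is a colimit, a morphism out of it into any concrete sheaf $Z$ is the same as a compatible family of morphisms $F(i)\to Z$; so it suffices to produce, for each object $i$ of $\mathcal{I}$, a morphism $F(i)\to\colim_{\mathcal{J}}(F\circ i)$ compatible over $\mathcal{I}$, whose effect on underlying sets is $|\Phi|^{-1}$ restricted to the image of $|F(i)|$. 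This is exactly what condition (1) provides \emph{locally}: for any section $c\colon A\to F(i)$ we get a cover $\{A_\lambda\}$ and sections $d_\lambda\colon A_\lambda\to F(j_\lambda)$ which compose to sections of $\colim_{\mathcal{J}}(F\circ i)$; the sheaf condition on $\colim_{\mathcal{J}}(F\circ i)$ lets us glue these $d_\lambda$ over the cover, provided they agree on overlaps — and agreement on overlaps follows from the injectivity hypothesis (2), since the glued candidates all have the same image in $|\colim_{\mathcal{I}}F|\cong|\colim_{\mathcal{J}}(F\circ i)|$. This yields a well-defined morphism $F(i)\to\colim_{\mathcal{J}}(F\circ i)$; functoriality/compatibility over arrows of $\mathcal{I}$ is checked the same way, again using (2) to kill ambiguity. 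The universal property then gives $\Psi$, and $|\Psi\circ\Phi|=\mathrm{id}$, $|\Phi\circ\Psi|=\mathrm{id}$ by construction, so faithfulness of $|\blank|$ forces $\Psi$ and $\Phi$ to be mutually inverse in $\CSh(\mathcal{A})$.

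I expect the main obstacle to be the gluing step: verifying that the locally defined sections $d_\lambda$ into $\colim_{\mathcal{J}}(F\circ i)$ genuinely agree on overlaps $A_\lambda\times_A A_\mu$ (or a cover thereof), so that the sheaf axiom applies and produces a \emph{global} section over $A$. The natural route is to observe that both restrictions have the same underlying set map $A_\lambda\times_A A_\mu\to|\colim_{\mathcal{J}}(F\circ i)|$ — because after applying $|\Phi|$ they both agree with $c\circ c_\lambda\circ(\text{projection})$ into $|\colim_{\mathcal{I}}F|$ — and then use hypothesis (2) (injectivity of $|\Phi|$) together with faithfulness of the underlying-set functor to upgrade this equality of underlying maps to an equality of sections. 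One has to be a little careful that ``agree on overlaps'' in a general site may need passing to a further refinement, but since covers are jointly surjective and the site is concrete this causes no real trouble. Everything else — unwinding colimits of sets, the terminal-object bookkeeping for surjectivity, and the universal-property manipulations — is routine once this compatibility is in hand.
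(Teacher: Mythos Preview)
Your proposal is correct and follows essentially the same approach as the paper: use that the forgetful functor to $\Set$ preserves colimits so that the underlying-set map is a bijection by (1) and (2), and then use (1) again together with the sheaf property to see that the inverse bijection respects sections. The paper's proof is extremely terse (``it is easy to see that the inverse map \ldots\ is also a morphism''), whereas you spell out the gluing step carefully; your detour through the universal property of $\colim_{\mathcal{I}}F$ (constructing compatible $\psi_i\colon F(i)\to\colim_{\mathcal{J}}(F\circ i)$) is equivalent to, but slightly more elaborate than, the paper's direct check that $|\Phi|^{-1}$ sends sections to sections using the colifting description of sections of a colimit.
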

\begin{proof}
Recall that a colimit in a category of concrete sheaves is the colifting
of the corresponding colimit in $\Set$. Condition (1) means that
the map $|\colim_{\mathcal{J}}(F\circ i)|\ra|\colim_{\mathcal{I}}F|$
is surjective, so together with Condition (2), this map is a bijection.
Then use Condition (1) again, it is easy to see that the inverse map
$\colim_{\mathcal{I}}F\ra\colim_{\mathcal{J}}(F\circ i)$ is also
a morphism in $\CSh(\mathcal{A})$.
\end{proof}
The hard part of applying this theorem is to check Condition (2).
We will make it more explicit in the following cases:

\subsubsection{Calculations of $\FX$}

In Subsection~\ref{sub:add}, we defined $\FX=\colim_{U\in\So/X}\FU$
for every diffeological space $X$. One can use this definition to
show that if $X$ is a discrete diffeological space, then $\FX$ is
a discrete Fermat space with $|\FX|=|X|$. But in general, the plot
category $\So/X$ is huge. We need to find a more efficient way to
calculate $\FX$.

In many examples, the diffeological space is given as a colimit of
Euclidean spaces over a small subcategory of its plot category. The
following proposition tells us when we can use this colimit to calculate
the corresponding Fermat space:
\begin{prop}
\label{prop:colimit} Let $X$ be a diffeological space, and let $\mathcal{B}$
be a subcategory of the plot category $\So/X$. Assume $X=\colim_{U\in\mathcal{B}}U$.
Then the natural Fermat map $\colim_{U\in\mathcal{B}}\FU\ra\FX$ is
surjective. If it is also injective, then it is an isomorphism in
$\FDiff$.\end{prop}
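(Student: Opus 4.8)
The plan is to derive both assertions from Theorem~\ref{thm:general}, applied with $\mathcal{A}=\F$, with $\mathcal{I}=\So/X$ (a genuinely small category: there is only a set of open subsets of the various $\R^{n}$, and for each one only a set of maps to $|X|$), with $\mathcal{J}=\mathcal{B}$ and the inclusion $i\colon\mathcal{B}\hookrightarrow\So/X$, and with $F\colon\So/X\ra\FDiff$ the functor sending an object $p\colon U\ra X$ to $\FU$ and a morphism of plots to its Fermat extension. Under these identifications $\colim_{\mathcal{J}}(F\circ i)=\colim_{U\in\mathcal{B}}\FU$, $\colim_{\mathcal{I}}F=\FX$, and the natural map between them is exactly the natural Fermat map of the statement. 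Condition~(2) of Theorem~\ref{thm:general} is precisely injectivity of this map on underlying sets, which is the extra hypothesis of the second assertion; and by the proof of Theorem~\ref{thm:general}, Condition~(1) alone already forces the underlying set map to be surjective. Hence it suffices to verify Condition~(1), and that verification simultaneously yields the surjectivity claim.

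To check Condition~(1), I would fix an object $p\colon U\ra X$ of $\So/X$ with $U\subseteq\R^{n}$, a section $c\colon A\ra\FU$ in $\F$, and a point $a\in A$, and set $u_{0}:=\st{c(a)}\in U$. Since $p$ is a plot of $X$ and $X=\colim_{V\in\mathcal{B}}V$ in $\Diff$, the diffeology of $X$ is the one generated by the structure maps $\{(q\colon V\ra X)\in\mathcal{B}\}$, so $p$ factors locally through $\mathcal{B}$ near $u_{0}$: there are an open neighbourhood $U_{0}\ni u_{0}$ in $U$, an object $q\colon V\ra X$ of $\mathcal{B}$, and a smooth map $g\colon U_{0}\ra V$ with $p|_{U_{0}}=q\circ g$ (if $p$ is locally constant near $u_{0}$ use joint surjectivity of the colimit cocone and take $g$ constant). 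Now put $A_{a}:=c^{-1}(\ext{U_{0}})$. Because $\ext{U_{0}}=\{y\mid\st{y}\in U_{0}\}$ is cut out by a condition on standard parts alone, we have $c(a)\in\ext{U_{0}}$, so $a\in A_{a}$; and since quasi-standard smooth maps are Fermat continuous, $A_{a}$ is Fermat open in $A$. Thus $\{A_{a}\hookrightarrow A\}_{a\in A}$ is a cover in $\F$. The corestriction $c|_{A_{a}}\colon A_{a}\ra\ext{U_{0}}$ is again quasi-standard smooth, so $d_{a}:=\ext{g}\circ c|_{A_{a}}\colon A_{a}\ra\FV$ is a section of $F(q)$; and since the colimit insertions of $F(p)$ and $F(q)$ into $\FX$ are $\ext{p}$ and $\ext{q}$, functoriality of $\ext{(\blank)}$ gives $\ext{q}\circ\ext{g}=\ext{(q\circ g)}=\ext{(p|_{U_{0}})}=\ext{p}\circ(\ext{U_{0}}\hookrightarrow\FU)$, whence $\ext{q}\circ d_{a}=\ext{p}\circ c|_{A_{a}}$ as maps $A_{a}\ra\FX$. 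This is exactly the commutative diagram demanded by Condition~(1) of Theorem~\ref{thm:general}.

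Once Condition~(1) is in hand, the natural Fermat map $\colim_{U\in\mathcal{B}}\FU\ra\FX$ is surjective, giving the first assertion; and if it is moreover injective on underlying sets, then Condition~(2) holds as well and Theorem~\ref{thm:general} yields that it is an isomorphism in $\FDiff$, giving the second. The only step that is not pure bookkeeping is the reduction carried out above: one has to transport the ordinary D-local factorization of the plot $p$ through $\mathcal{B}$ to a Fermat-local factorization of the section $c$, and the device that makes this go through is precisely that the Fermat open set $\ext{U_{0}}$ depends only on standard parts, so that its preimage $c^{-1}(\ext{U_{0}})$ is automatically a Fermat open neighbourhood of $a$. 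Everything else -- restrictions of quasi-standard smooth maps remaining quasi-standard smooth, functoriality of the Fermat extension, and the identification of the colimit insertions -- is routine.
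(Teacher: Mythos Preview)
Your proof is correct and is exactly the approach the paper has in mind: the paper's own proof reads, in its entirety, ``This is an easy corollary of Theorem~\ref{thm:general},'' and you have simply supplied the verification of Condition~(1) that the paper leaves implicit. Your identification of $\mathcal{I}$, $\mathcal{J}$, $F$ and the colimit insertions is the intended one, and the key step---pulling the D-local factorization of $p$ through $\mathcal{B}$ back to a Fermat-open cover of $A$ via $c^{-1}(\ext{U_{0}})$---is precisely what makes Condition~(1) go through.
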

\begin{proof}
This is an easy corollary of Theorem~\ref{thm:general}.
\end{proof}
To apply this proposition to calculate $\FX$, the key part is to
check the injectivity of the natural map $\colim_{U\in\mathcal{B}}\FU\ra\FX$.
Injectivity is equivalent to the condition that for any plots $p:U\ra X$
and $q:U'\ra X$ in $\mathcal{B}$, and any points $u\in\FU$ and
$u'\in\FU'$, if there exist plots $V_{1}\ra X,\cdots,V_{n}\ra X$,
points $v_{1}\in\FV_{1},\ldots,v_{n}\in\FV_{n}$, and zig-zag morphisms
among these plots together with $p$ and $q$ in $\So/X$ such that
the Fermat extension of the zig-zag connects $u$ and $u'$ via these
$v_{i}$'s, (by applying $ev_{0}$, this implies that $\st{u}$ and
$\st{(u')}$ represent the same point in $X$), then there exist plots
$U_{1}\ra X,\cdots,U_{m}\ra X$ for some $m\in\N$, points $u_{1}\in\FU_{1},\ldots,u_{m}\in\FU_{m}$,
and zig-zag morphisms among these plots together with $p$ and $q$
in $\mathcal{B}$ such that the Fermat extension of this zig-zag connects
$u$ and $u'$ via these $u_{j}$'s. We will use this description
to calculate the following examples, and from these examples, we abstract
some general results.
\begin{example}
\label{exa:calculation}\ 
\begin{enumerate}[leftmargin=*,label=(\roman*),align=left]
\item \label{enu:calculation-manifold}Let $M$ be a smooth manifold, and
let $\{(U_{i},\phi_{i})\}_{i\in I}$ be a smooth atlas. Then we can
construct a category $\mathcal{I}$ with objects finite subsets of
$I$ and morphisms inclusion maps. There is a canonical functor $\mathcal{I}^{op}\ra\Diff$
sending a finite subset $\{i_{1},\ldots,i_{n}\}\subseteq I$ to $U_{i_{1}}\cap\cdots\cap U_{i_{n}}$,
and sending the inclusion map to the corresponding inclusion map.
It is easy to see that $M$ is the colimit of this functor, so we
write $M=\colim_{i\in\mathcal{I}^{op}}U_{i}$. One can also check
that the injectivity of Proposition~\ref{prop:colimit} holds by
the definition of a smooth atlas on a smooth manifold, and hence $\FM=\colim_{i\in\mathcal{I}^{op}}\FU_{i}$.
\item Let $X$ be the pushout of 
\[
\xymatrix{\R & \R^{0}\ar[l]_{0}\ar[r]^{0} & \R}
\]
in $\Diff$, i.e., $X$ is two real lines glued at the origin. One
can show that the injectivity of Proposition~\ref{prop:colimit}
holds, and hence $\FX$ is the pushout of 
\[
\xymatrix{\FR & \FR^{0}\ar[l]_{0}\ar[r]^{0} & \FR}
\]
in $\FDiff$, i.e., $\FX$ is two Fermat reals glued at the origin.
\item Let $V$ be a fine diffeological vector space (see \cite[Chapter~3]{Igl13}),
and let $\mathcal{I}$ be the poset with objects finite dimensional
linear subspaces of $V$ and morphisms inclusions. Then by \cite[3.8]{Igl13},
it is easy to see that $V=\colim_{W\in\mathcal{I}}W$ and that the
injectivity of Proposition~\ref{prop:colimit} holds, which implies
that $\ext{V}=\colim_{W\in\mathcal{I}}\ext{W}$.
\end{enumerate}
\end{example}
Here is a general result from these three examples:
\begin{prop}
Let $\mathcal{B}$ be a subcategory of the plot category $\So/X$
over a diffeological space $X$. Assume that every object $U\ra X$
in $\mathcal{B}$ is an injective map such that the pullback diffeology
on $U$ coincides with the standard diffeology, and for any objects
$p:U\ra X$ and $q:V\ra X$ with $p(U)\cap q(V)\neq\emptyset$, there
exist an object $r:W\ra X$ with $r(W)=p(U)\cap q(V)$ and morphisms
$r\ra p$ and $r\ra q$ in $\mathcal{B}$. Moreover, if $X=\colim_{U\in\mathcal{B}}U$,
then $\FX=\colim_{U\in\mathcal{B}}\FU$.\end{prop}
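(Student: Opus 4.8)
The plan is to verify the hypotheses of Proposition~\ref{prop:colimit}, i.e.\ to show that under the stated assumptions the natural Fermat map $\colim_{U\in\mathcal{B}}\FU\ra\FX$ is injective; surjectivity is automatic. By the explicit description of injectivity given just before Example~\ref{exa:calculation}, I must show the following: given plots $p\colon U\ra X$ and $q\colon U'\ra X$ in $\mathcal{B}$ and points $u\in\FU$, $u'\in\FU'$ that become equal in $\FX=\colim_{V\in\So/X}\FV$, they are already identified by a zig-zag inside $\mathcal{B}$. Applying $ev_0$ to the identifying zig-zag in $\So/X$, we first learn that $p(\st u)=q(\st{u'})=:x_0\in X$, so $p(U)\cap q(U')\ne\emptyset$ and the hypothesis furnishes an object $r\colon W\ra X$ with $r(W)=p(U)\cap q(U')$ together with morphisms $r\ra p$ and $r\ra q$ in $\mathcal{B}$.

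The key point is then that, because each object of $\mathcal{B}$ is an injective map carrying the pullback diffeology (equivalently, is an induction onto its image with the subset diffeology), the morphisms $r\ra p$ and $r\ra q$ are themselves inductions onto open subsets: concretely $W$ may be identified with $p^{-1}\bigl(r(W)\bigr)\subseteq U$ and with $q^{-1}\bigl(r(W)\bigr)\subseteq U'$, and these are open in $U$, $U'$ respectively since $X=\colim_{U\in\mathcal{B}}U$ forces $r(W)=p(U)\cap q(U')$ to pull back to open sets. Now $\st u\in p^{-1}(r(W))$ because $p(\st u)=x_0\in r(W)$, hence $u\in\FU$ actually lies in $^{\bullet}\bigl(p^{-1}(r(W))\bigr)$ and thus is in the image of $\FW\ra\FU$; similarly $u'$ is in the image of $\FW\ra\FU'$. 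It remains to check that the two preimages of $u,u'$ in $\FW$ agree. For this I would feed the original identifying zig-zag in $\So/X$ through $ev_0$ and $i_W$, or more directly observe that both preimages have the same standard part (namely a point of $W$ mapping to $x_0$) and the same infinitesimal part, the latter because the Fermat extensions of the inductions $W\hookrightarrow U$, $W\hookrightarrow U'$ are injective and send the common preimage onto $u$ and $u'$; so the zig-zag $p\leftarrow r\rightarrow q$ in $\mathcal{B}$, with the point of $\FW$ lying over both $u$ and $u'$, does the job.

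The main obstacle is the last verification, namely that the common point of $p(U)\cap q(U')$ determined by $u$ and by $u'$ is the same point of $W$ once we include infinitesimals, not merely the same standard point. The subtlety is that an identification in $\FX$ could in principle route through plots outside $\mathcal{B}$ whose images are not comparable to $r(W)$, so one cannot simply restrict the given zig-zag to $W$. The way around this is to use that $^{\bullet}(\blank)$ preserves finite intersections of D-open subsets and that $W\hookrightarrow U$, $W\hookrightarrow U'$ are inductions: then $(\Fp)^{-1}\bigl(^{\bullet}r(W)\bigr)={}^{\bullet}\bigl(p^{-1}(r(W))\bigr)$ by the proposition following Proposition~\ref{prop:D=00003D>Fermat}, so $u$ has a well-defined germ in $\FW$ and the ambient identification in $\FX$, restricted along the Fermat-open subset $^{\bullet}(r(W))\subseteq\FX$, forces the two germs to coincide. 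Once this is in hand, injectivity of $\colim_{U\in\mathcal{B}}\FU\ra\FX$ follows, and Proposition~\ref{prop:colimit} yields $\FX=\colim_{U\in\mathcal{B}}\FU$.
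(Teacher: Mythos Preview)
Your argument has a genuine gap: you assume that the morphisms $r\to p$ and $r\to q$ in $\mathcal{B}$ are inclusions of \emph{open} subsets, equivalently that the images $p(U)$ (and hence $r(W)=p(U)\cap q(U')$) are $D$-open in $X$. Nothing in the hypotheses guarantees this, and it is false in Example~\ref{exa:calculation}(iii): for a fine diffeological vector space $V$ the objects of $\mathcal{B}$ are linear inclusions $W\hookrightarrow V$ of finite-dimensional subspaces, and the morphism $W\cap W'\hookrightarrow W$ is a linear inclusion whose image is not open in $W$ when $\dim(W\cap W')<\dim W$. Consequently your key step ``$\st u\in p^{-1}(r(W))$, hence $u\in{}^{\bullet}\bigl(p^{-1}(r(W))\bigr)$'' fails: for a non-open subset $S\subseteq U$, the condition $\st u\in S$ does \emph{not} place $u$ in the image of $\ext{S}\to\FU$ (take $S=\{0\}\subset\R$ and $u\in D_\infty\setminus\{0\}$). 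The later appeals to Corollary~\ref{cor:properties-of-add-infinitesimal} and to the proposition after Proposition~\ref{prop:D=00003D>Fermat} all presuppose the same unwarranted $D$-openness.

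The paper avoids this trap by never trying to lift $u$ directly through the $\mathcal{B}$-morphism $W\to U$. Instead it processes the identifying zig-zag in $\So/X$ piece by piece: for each general plot $q\colon V\to X$ occurring in the zig-zag, the hypothesis $X=\colim_{U\in\mathcal{B}}U$ gives an \emph{open} neighbourhood $V'$ of the relevant standard point together with a factorisation $q|_{V'}=r\circ g$ through some $r\in\mathcal{B}$. Pulling back this open $V'$ along the smooth maps in the zig-zag produces genuinely open subsets (so the Fermat points do lie in their $\ext{(\blank)}$), and then injectivity of $p$ and $r$ is used to build a smooth map $h$ into the intersection object $W\in\mathcal{B}$ and to verify that $\ext{h}$ carries the given Fermat point to a preimage under $\FW\to\FU$. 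In short, the openness one needs comes from shrinking the \emph{auxiliary} plots of the zig-zag, not from the morphisms of $\mathcal{B}$; your shortcut of going straight to the single roof $p\leftarrow r\to q$ loses exactly this freedom.
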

\begin{proof}
By Proposition~\ref{prop:colimit}, we are left to check the injectivity
of the natural map $\colim_{U\in\mathcal{B}}\FU\ra\FX$. We split
zig-zag diagrams into two kinds of pieces, and study them separately:

(1) Assume that we have a commutative triangle in $\Diff$: 
\[
\xymatrix{U\ar[rr]^{f}\ar[dr]_{p} &  & V\ar[dl]^{q}\\
 & X
}
\]
with $p$ an object in $\mathcal{B}$, $q$ a plot and $f$ a smooth
map, and $u\in\FU$ and $v\in\FV$ are fixed points such that $\Ff(u)=v$.
Since $X=\colim_{U\in\mathcal{B}}U$, there exist an open neighborhood
$V'$ of $\st{v}$ in $V$, an object $r:U'\ra X$ in $\mathcal{B}$
and a smooth map $g:V'\ra U'$ such that $r\circ g=q|_{V'}$. Therefore,
$p(U)\cap r(U')\neq\emptyset$. By the assumption of the proposition,
there exist an object $s:W\ra X$ with $s(W)=p(U)\cap r(U')$ and
morphisms $s\ra p$ and $s\ra r$ in $\mathcal{B}$. So eventually
we have the following commutative diagram in $\Diff$: 
\[
\xymatrix{ & W\ar[drrr]\ar[dl]\\
U\ar[drrr]_{f} & \ f^{-1}(V')\ar@{_{(}->}[l]\ar[rr]^{f|_{f^{-1}(V')}}\ar[u] &  & V'\ar[r]_{g}\ar@{^{(}->}[d] & U'\\
 &  &  & V.
}
\]

(2) Assume that we have a commutative diagram in $\Diff$: 
\[
\xymatrix{V_{1}\ar[dr]_{q_{1}} & V_{2}\ar[l]_{f}\ar[r]^{g}\ar[d]^{q_{2}} & V_{3}\ar[dl]^{q_{3}}\\
 & X
}
\]
with $q_{1},q_{2},q_{3}$ plots and $f,g$ smooth maps, and $v_{1}\in\FV_{1},v_{2}\in\FV_{2},v_{3}\in\FV_{3}$
are fixed points such that $\Ff(v_{2})=v_{1}$ and $\Fg(v_{2})=v_{3}$.
Since $X=\colim_{U\in\mathcal{B}}U$, there exist open neighborhoods
$V_{1}'$ and $V_{3}'$ of $\st{v_{1}}$ and $\st{v_{3}}$ in $V_{1}$
and $V_{3}$, respectively, objects $p_{1}:U_{1}\ra X$ and $p_{3}:U_{3}\ra X$
in $\mathcal{B}$ and smooth maps $h_{1}:V_{1}'\ra U_{1}$ and $h_{3}:V_{3}'\ra U_{3}$
such that $p_{1}\circ h_{1}=q_{1}|_{V_{1}'}$ and $p_{3}\circ h_{3}=q_{3}|_{V_{3}'}$.
Write $V_{2}':=f^{-1}(V_{1}')\cap g^{-1}(V_{3}')$. It is clear that
$p_{1}(U_{1})\cap p_{3}(U_{3})\neq\emptyset$. By the assumption of
the proposition, there exist an object $r:W\ra X$ with $r(W)=p_{1}(U_{1})\cap p_{3}(U_{3})$
and morphisms $r\ra p_{1}$ and $r\ra p_{3}$ in $\mathcal{B}$. So
eventually we have the following commutative diagram in $\Diff$:
\[
\xymatrix{U_{1} & W\ar[l]\ar[r] & U_{3}\\
V_{1}'\ar[u]^{h_{1}}\ar@{^{(}->}[d] & V_{2}'\ar[u]\ar[l]\ar[r]\ar@{^{(}->}[d] & V_{3}'\ar@{^{(}->}[d]\ar[u]_{h_{3}}\\
V_{1} & V_{2}\ar[l]^{f}\ar[r]_{g} & V_{3}.
}
\]

From these diagrams, we know that the natural map $\colim_{U\in\mathcal{B}}\FU\ra\FX$
is injective.\end{proof}
\begin{example}
\label{exa:irrational-torus}Let $X$ be the $1$-dimensional irrational
torus of slope $\theta$, i.e., $X$ is the quotient group $\R/(\Z+\theta\Z)$
with the quotient diffeology, where $\theta$ is a fixed irrational
number. Let $\mathcal{J}$ be the category associated to the additive
group $\Z+\theta\Z$, i.e., $\mathcal{J}$ has one object, the morphisms
in $\mathcal{J}$ are indexed by the set $\Z+\theta\Z$, and the composition
corresponds to the addition in the additive group $\Z+\theta\Z$.
There is a functor $\mathcal{J}\ra\Diff$ sending $a+\theta b:\cdot\ra\cdot$
to $\R\ra\R$ with $x\mapsto x+(a+\theta b)$. It is straightforward
to check that the colimit of this functor is $X$. Since the projection
$\pi:\R\ra X$ is a diffeological covering (see~\cite[Chapter~8]{Igl13}),
one can show that the injectivity of Proposition~\ref{prop:colimit}
holds, and hence $\FX=\colim_{\mathcal{J}}\FR$, or more precisely,
$\FX$ is the quotient group $\FR/(\Z+\theta\Z)$.
\end{example}
However, the adding infinitesimal functor does not always preserve
colimits. That is why the calculation of $\FX$ is not easy. In particular,
this implies that the adding infinitesimal functor does not have a
right adjoint.
\begin{example}
\label{exa:not-commute-colimits}Let $R$ be the category associated
to the additive group $\R$, and let $F:R\ra\Diff$ be the functor
sending the object in $R$ to $\R$ and sending the morphism $r\in\R$
to the translation $\R\ra\R$ by $x\mapsto x+r$. Then $\colim F=\R^{0}$.
One can easily check that $|\colim({^{\bullet}(\blank)}\circ F)|=|D_{\infty}|$,
since there are only translations by reals, but $|^{\bullet}(\colim F)|=|\FR^{0}|=|\R^{0}|$.
Therefore, $\colim({^{\bullet}(\blank)}\circ F)\neq{^{\bullet}(\colim F)}$.
\end{example}

\subsubsection{Calculations of $\LY$}

In previous subsections, we have already calculated some examples
of $\LY$, where $Y$ is a Fermat space: in Subsection~\ref{sub:delete},
we showed that $_{\bullet}(\FX)=X$ and $_{\bullet}(\FDiff(\FX,\FZ))=\Diff(X,Z)$
for any diffeological spaces $X$ and $Z$, and in Subsection~\ref{sub:why},
we showed that $_{\bullet}(U\leq\FU)$ is a discrete Fermat space
for any open subset $U$ of a Euclidean space. We will calculate one
more example below, which will be useful for defining tangent spaces
and tangent bundles for Fermat spaces and diffeological spaces (which
is different from the approaches presented in~\cite{Ch-Wu16}) in
a future paper.
\begin{example}
\label{ex:calculate} Let $A$ be an ideal of $\FR$. Then $A\subseteq D_{\infty}$.
(See Section~\ref{sec:BasicFermatReals} for all possible expressions
of $A$, which are not needed in this example.) Let $Y$ be the quotient
ring $\FR/A$, equipped with the quotient Fermat space structure from
$\FR$. Then $\LY$ is diffeomorphic to $\R$. Here is the proof.
Note that the quotient map $\pi:\FR\ra Y$ induces a smooth map $_{\bullet}\pi:\R\ra\LY$.
By Theorem~\ref{thm:general}, we are left to show that $_{\bullet}\pi$
is injective. Assume that $x,y\in\R$ are mapped to the same point
in $\LY$. So there is a zig-zag diagram in $\F'/Y$ connecting two
copies of $\pi:\FR\ra Y$, such that after applying the deleting infinitesimal
functor on the zig-zag, there is a fixed point on $U$ for each $\FU\ra Y$
in the original zig-zag so that $x$ and $y$ gets connected by the
new zig-zag via these points. We break the original zig-zag into small
pieces as follows and study them to get information of the new zig-zag
on the corresponding small pieces:

(1) Assume that we have a commutative triangle 
\[
\xymatrix{\FR\ar[rr]^{f}\ar[dr]_{\pi} &  & \FU\ar[dl]^{p}\\
 & Y
}
\]
in $\F'/Y$, and points $x\in\R$ and $u\in U$ such that $\Lf(x)=u$.
Since $Y$ is a quotient Fermat space of $\FR$, there is a Fermat
open neighborhood $\FV$ of $f(x)$ in $\FU$ and a quasi-standard
smooth map $g:\FV\ra\FR$ so that $p|_{\FV}=\pi\circ g$. So we get
a commutative square 
\[
\xymatrix{ & \FW\ar[dl]\ar[dr]\\
\FR\ar[dr]_{\pi} &  & \FR\ar[dl]^{\pi}\\
 & Y,
}
\]
where $\FW$ is a Fermat open neighborhood of $x$ in $f^{-1}(\FV)$.
We will deal with this situation in (2).

(2) Assume that we have a commutative square 
\[
\xymatrix{ & \FU\ar[dl]_{f}\ar[dr]^{g}\\
\FR\ar[dr]_{\pi} &  & \FV\ar[dl]^{p}\\
 & Y
}
\]
in $\F'/Y$, and points $x\in\R$, $u\in U$ and $v\in V$ such that
$\Lf(u)=x$ and $\Lg(u)=v$. By a similar argument as (1), we may
assume that $\FV=\FR$ and $p=\pi$. By the commutativity of the square,
we have $f(u)-g(u)\in A$, which implies that $x=\Lf(u)=\Lg(u)=v$.

Together with these, we can conclude that $x=y\in\R$, i.e., the map
$_{\bullet}\pi:\R\ra\LY$ is injective, and hence a diffeomorphism.\end{example}


\begin{thebibliography}{10}
\bibitem{Ba-Ho11} J.C. Baez and A.E. Hoffnung, Convenient categories
of smooth spaces, Trans. Amer. Math. Soc. 363(11) (2011), pp. 5789-5825.

\bibitem{Ch-Wu16} J.D. Christensen and E. Wu, Tangent spaces and
tangent bundles for diffeological spaces, to appear in Cah. Topol.
Geom. Differ. Categ. LVII (2016). Preprint available at: http://arxiv.org/abs/1411.5425.

\bibitem{Ch-Si-Wu14} J.D. Christensen, J.G. Sinnamon, and E. Wu,
The $D$-topology for diffeological spaces, Pac. J. Math 272(1) (2014),
pp. 87-110.

\bibitem{Dub79} E. Dubuc, Concrete quasitopoi, Applications of sheaves
(Proc. Res. Sympos. Appl. Sheaf Theory to Logic, Algebra and Anal.,
Univ. Durham, Durham, 1977), Lecture Notes in Math., 753 (1979), pp.
239-254, Springer, Berlin.

\bibitem{Gio09} P. Giordano, Fermat reals: nilpotent infinitesimals
and infinite dimensional spaces, preprint, available at http://arxiv.org/abs/0907.1872.

\bibitem{Gio10} P. Giordano, The ring of Fermat reals, Adv. Math.
225(4) (2010), pp. 2050-2075.

\bibitem{Gio11} P. Giordano, Fermat-Reyes method in the ring of Fermat
reals, Adv. Math. 228(2) (2011), pp. 862-893.

\bibitem{Gi-Ku13} P. Giordano and M. Kunzinger, Topological and algebraic
structures on the ring of Fermat reals, Israel J. Math. 193(1) (2013),
pp. 459-505.

\bibitem{Gi-Wu15} P. Giordano and E. Wu, Calculus in the ring of
Fermat reals, Part I: Integral calculus, Adv. Math. 289 (2016), pp.
888-927.

\bibitem{Igl13} P. Iglesias-Zemmour, Diffeology, Mathematical Surveys
and Monographs, 185, AMS, Providence, 2013.

\bibitem{Joh02} P.T. Johnstone, Sketches of an elephant: a topos
theory compendium, Vol. 2, Oxford Logic Guides, 44, Clarendon Press,
Oxford, 2002.

\bibitem{Ko06} A. Kock, Synthetic differential geometry, Second Edition,
London Mathematical Society Lecture Note Series 333, Cambridge University
Press, 2006.

\bibitem{La96} R. Lavendhomme, Basic concepts of synthetic differential
geometry, 13, Kluwer Texts in the Mathematical Sciences, Kluwer Academic
Publishers, 1996.

\bibitem{Mac03} S. MacLane, Categories for the working mathematician,
second edition, Graduate Texts in Mathematics, 5, Springer-Verlag,
2003.

\bibitem{Ro96} A. Robinson, Non-standard analysis, Revised Edition,
Princeton Landmarks in Mathematics, Princeton University Press, 1996.

\bibitem{So80} J.-M. Souriau, Groupes differentiels, pp. 91-128,
Differential geometrical methods in mathematical physics (Aix-en-Provence/Salamanca,
1979), edited by P.L. Garcia et al., Lecture Notes in Math. 836, Springer,
Berlin, 1980.

\bibitem{So84} J.-M. Souriau, Groupes differentiels de physique mathematique,
pp. 73-119, South Rhone Seminar on geometry, II (Lyon, 1983), edited
by P. Dazord and N. Desolneux-Moulis, Hermann, Paris, 1984.

\bibitem{Wu12} E. Wu, A homotopy theory for diffeological spaces,
Ph.D. thesis, the University of Western Ontario, 2012.\end{thebibliography}
\end{document}